\newtheorem{thm}{Theorem}[section]
\newtheorem{lem}[thm]{Lemma}
\newtheorem{proposition}[thm]{Proposition}
\newtheorem{corollary}[thm]{Corollary}
\newcounter{unnumber}
\newtheorem{nothm}[unnumber]{Theorem$\!\!$}
\newtheorem{qu}{Question}
\theoremstyle{remark}
\theoremstyle{plain}
\numberwithin{equation}{section}
\def\ZZ{{\mathbb Z}}
\def\QQ{{\mathbb Q}}
\def\RR{{\mathbb R}}
\def\NN{{\mathbb N}}
\def\ab{{\rm ab}}
\def\leq{\leqslant}
\def\geq{\geqslant}
\def\Ntf{{{\mathcal N}_{\mathrm {tf}}}}
\newcommand{\gp}{{\langle}}
\newcommand{\rp}{{\rangle}}
\newcommand{\ga}{\gamma}
\def\ZZ{{\mathbb Z}}
\def\QQ{{\mathbb Q}}
\def\RR{{\mathbb R}}
\def\NN{{\mathbb N}}
\def\leq{\leqslant}
\def\geq{\geqslant}
\def\gt{$\hat {\rm t}$}
\def\C{{\mathrm C}}
\newtheorem{defi}{Definition}[section]
\newtheorem{theorem}{\bf Theorem}
\newtheorem{prop}[defi]{Proposition}
\newtheorem{cor}[defi]{Corollary}
\newtheorem{remark}[defi]{Remark}
\newtheorem{example}[defi]{Example}
\newtheorem{*example}[defi]{*Example}
\begin{document}
\title{\bf Residual nilpotence and ordering in one-relator groups and knot groups}
\author{I.\ M.\ Chiswell, A.\ M.\ W.\ Glass and John S.\ Wilson} \small{\date{\today}} \maketitle  \def\thefootnote{} \footnote{


Keywords: one-relator group,
HNN extension, residually torsion-free nilpotent,
parafree, tree product, generalized torsion element, orderable group, knot group, Alexander polynomial. \vfill\eject
The authors are most grateful to Queens' College, Cambridge for partial funding and Dale Rolfsen for very helpful comments.  The third author thanks the Erwin Schr\"odinger Institut in Vienna,
where part of this work was done.
}

\setcounter{theorem}{0}
\setcounter{footnote}{0}

\begin{abstract}
Let $G=\langle x,t\mid w\rangle$ be a one-relator group, where $w$ is a word in $x,t$. If $w$ is a product of conjugates of $x$ then, associated with $w$, there is a polynomial $A_w(X)$ over the integers, which in the case when $G$ is a knot group, is the Alexander polynomial of the knot.  We prove, subject to certain restrictions on $w$, that if all roots of $A_w(X)$ are real and  positive then $G$ is bi-orderable, and that if $G$ is bi-orderable then at least one root is real and positive. This sheds light on the bi-orderability of certain knot groups and on a question of Clay and Rolfsen. One of the results relies on an extension of work of G. Baumslag on adjunction of roots to groups,
and this may have independent interest.
\end{abstract}

\section{Introduction}

Let $G$ be a group.   A total order $<$ on $G$ is called a {\em bi-order}
if $u<v$ implies $ux<vx$ and $xu<xv$ for all $u,v,x\in G$.  We say that $G$ is {\em  bi-orderable} if there exists a bi-order on $G$.  Free groups are bi-orderable (see \cite[Example 1.3.24]{G99}).
The bi-orderability of groups arising in a topological context, especially knot groups, has attracted considerable interest.  Here we shall be mainly concerned with one-relator groups with two generators.

Let $w=w(x,t)$ be a non-identity element of the free group $F$ on $x,t$, and suppose that the weight (that is, the exponent sum) of $t$ in $w$ is zero. Then $w$ lies in the normal subgroup $N$ of $F$ generated by $x$ and we can write
$$w=x^{m_1t^{d_1}}\dots
x^{m_rt^{d_r}}\eqno(\ast)$$ for some integer $r\geq1$ and some integers $d_i$, $m_i$.
(Throughout, $a^b$ stands for $b^{-1}ab$, $[a,b]$ for $a^{-1}a^b$.) For each $j\in\ZZ$, let $\tau_j(w)=\{i\mid d_i=j\}$ and let $S_w=\{j\mid\sum_{i\in \tau_j(w)}m_i\neq0\}$. Thus $S_w$ is empty if and only if $w$ is in the derived subgroup $N'$ of $N$.
Suppose that $w\in N\setminus N'$, so that $S_w\neq\emptyset$; let $e_w$ and $d_w$ be the smallest and largest members of $S_w$ and define
$A_w(X)$ to be the element $\sum_{i=1}^r m_iX^{d_i-e_w}$ of the polynomial ring $\ZZ[X]$. Hence $A_w(X)$ is a polynomial with non-zero constant coefficient.

We say that $w$ is a {\em tidy word} if $w\in N\setminus N'$ and $\tau_j(w)=\emptyset$ for all $j>d_w$ and all $j<e_w$. We say that $w$ is {\em principal} if $w$ is  a tidy word and $\tau_{d_w}(w)$ contains just one element; $w$ is called {\em monic} if $w$ is principal and $m_k=1$, where $k$ is the unique element of $\tau_{d_w}(w)$. Thus the polynomial $A_w(X)$ is monic if $w$ is monic. Clearly $A_w=A_{w^t}$.

We shall consider groups $G$ with two generators $x,t$ and a single defining relation $w(x,t)=1$.
By possibly a change of generators (cf.\ \cite[Chapter V, Lemma 11.8]{LS}), we may assume that $w$ has the form $(\ast)$. Let $K$ be the normal subgroup of $G$ generated by $x$; so $G=K\rtimes \langle t\rangle$ and $\langle t\rangle$ is infinite cyclic. Note that, assuming $w\in N\setminus N'$,
$A_w(X)$ is an integer multiple of the characteristic polynomial of the automorphism of
$(K/K')\otimes_{\ZZ}\RR$ induced by conjugation by $t$ in $G$. Moreover, the characteristic polynomial
of this automorphism is equal to its minimal polynomial.

In the case when $G=\langle x,t\mid w\rangle$ is the group of a knot in the $3$-sphere, $A_w$ is just the Alexander polynomial of the knot. This can be seen, for example, from the description of the Alexander polynomial in \cite[Chapter VIII, Section 3]{CF}.

We shall prove the following result.

\begin{theorem}\label{A} Let $h,z$ be non-identity elements of any bi-ordered group $G$ and
suppose that $w(h,z)=1$, where
$w$ is a tidy word in the free group on $x,t$.
Then $A_w(X)$ has a positive root.  \end{theorem}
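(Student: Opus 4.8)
The plan is to exploit the fact that conjugation by a fixed element of a bi-ordered group is an order-preserving automorphism, and to reduce the statement to a positivity property of such automorphisms acting linearly. Write $a^b=b^{-1}ab$ as in the paper, and let $\phi\colon G\to G$ be conjugation by $z$, so that $\phi$ is an order-preserving automorphism of $(G,<)$ and $\phi^{j}(h)=h^{z^{j}}$. Since the $t$-exponent sum of $w$ is zero, the relation $w(h,z)=1$ reads
\[
\phi^{d_1}(h)^{m_1}\,\phi^{d_2}(h)^{m_2}\cdots\phi^{d_r}(h)^{m_r}=1 .
\]
Let $K=\langle \phi^{j}(h):j\in\ZZ\rangle$; this is a $\phi$-invariant subgroup of $G$, bi-ordered by the restriction of $<$, and the displayed relation holds in $K$.

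Next I would abelianise while retaining a total order. Let $C$ be the convex hull of $K'$ in $K$. Because conjugation by any element of $K$ and by $z$ are order-automorphisms carrying the normal subgroup $K'$ to itself, $C$ is a convex, normal, $\phi$-invariant subgroup of $K$; since $C\supseteq K'$, the quotient $B=K/C$ is a \emph{totally ordered abelian group}, and $\phi$ induces an order-preserving automorphism $\bar\phi$ of $B$. Writing $B$ additively and $v$ for the image of $h$, the relation collapses to $\sum_i m_i\,\bar\phi^{\,d_i}(v)=0$; factoring out the invertible $\bar\phi^{\,e_w}$ turns this into $A_w(\bar\phi)\,v=0$. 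Here tidiness is used: it guarantees that all exponents $d_i$ lie in $[e_w,d_w]$ and that $A_w$ has nonzero constant and leading coefficients, so that $A_w(0)\neq0$ and the $\bar\phi$-cyclic subgroup generated by $v$ has finite rank.

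The engine is then the following positivity lemma: \emph{an order-preserving linear automorphism of a nonzero finite-dimensional totally ordered real vector space has characteristic polynomial equal to a product of factors $X-\lambda$ with $\lambda>0$.} I would prove it by noting that the convex subgroups of such a space are $\RR$-subspaces forming a finite chain, each Archimedean jump being one-dimensional by Hölder's theorem, so that the automorphism acts on each jump as multiplication by a positive scalar. To apply it, let $U$ be the $\QQ$-span of the $\bar\phi$-orbit of $v$ in $B\otimes_\ZZ\QQ$; by the relation $U$ is finite-dimensional and $\bar\phi$-invariant. A Hahn embedding realises $U$ as a totally ordered $\QQ$-subspace of some lexicographically ordered $\RR^{k}$, and $\bar\phi$ extends to an order-preserving $\RR$-linear automorphism of $V=U\otimes_\QQ\RR$. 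The minimal polynomial of $v$ divides $A_w$, is nonconstant since $v\neq0$, and divides the characteristic polynomial of $\bar\phi$ on $V$; by the lemma its roots are positive, so $A_w$ has a positive root.

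The step I expect to be the main obstacle is showing that $v\neq0$, i.e.\ that $h\notin C$, for only then does $A_w(\bar\phi)v=0$ carry information. Here the $\phi$-monogenicity of $K$ helps: if $h\in C$ then, $C$ being normal and $\phi$-invariant, every generator $\phi^{j}(h)$ lies in $C$, so $K=C$ and $K'$ is cofinal in $K$. Thus it suffices to prove that the derived subgroup of a nontrivial bi-ordered group is never cofinal. The idea is that any order-preserving homomorphism $\nu\colon K\to(\RR,+)$ kills $K'$, so cofinality of $K'$ would force $\nu\equiv0$; one then needs a nonzero such $\nu$, which I would extract from a top Archimedean jump via Hölder's theorem. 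Making this extraction rigorous — handling the possibility that the chain of convex subgroups has no largest proper member, where the finite-rank structure forced on $B$ by the relation $A_w(\bar\phi)v=0$ must be brought to bear — is the delicate point of the argument.
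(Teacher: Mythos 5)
The fatal problem is the step you yourself flag: proving $v\neq0$, i.e.\ that $h$ does not lie in the convex hull $C$ of $K'$. You reduce this to the general claim that \emph{the derived subgroup of a nontrivial bi-ordered group is never cofinal}, but that claim is false: there exist algebraically simple bi-ordered groups (Chehata's ordered simple group, 1952, is the classical example), and for such a group $K=K'$, so the derived subgroup is certainly cofinal. The claim \emph{is} true for finitely generated bi-ordered groups --- the union of the chain of proper convex subgroups is then still proper, so a maximal proper convex subgroup $L$ exists, $K/L$ is Archimedean, hence abelian by H\"older, whence $K'\leq L$ --- but your $K=\langle h^{z^j}\mid j\in\ZZ\rangle$ is not finitely generated, and nothing in your argument rules out that its proper convex subgroups have union equal to $K$. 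So the ``delicate point'' is not a technicality to be tidied up later: as written, the reduction rests on a false lemma, and without $v\neq 0$ the identity $A_w(\bar\phi)v=0$ carries no information. A telling symptom is that your argument never genuinely uses tidiness: the relation $A_w(\bar\phi)v=0$ and the finite-dimensionality of the span of the orbit of $v$ hold for \emph{every} $w\in N\setminus N'$ (the polynomial $A_w$ always has nonzero leading and constant coefficients by construction), so a complete version of your proof would establish a strictly stronger theorem than the paper's --- a strong signal that the missing step is exactly where the tidiness hypothesis has to enter.

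It is instructive to see how the paper escapes this trap. It works in the finitely generated group $\langle h,z\rangle$, which does have a maximal proper convex subgroup $L$ with abelian Archimedean quotient; the relation there gives $\bar h^{A_w(1)}=1$, so either $A_w(1)=0$ --- in which case $1$ is a positive root and the proof simply stops, no nondegeneracy being needed --- or $h\in L$. In the latter case tidiness (nonvanishing of the coefficient sums at the extreme exponents $e_w$, $d_w$) is used to show that the smallest convex subgroup $C^*$ containing the \emph{finite} set $Y=\{h^{z^{e_w}},\dots,h^{z^{d_w-1}}\}$ absorbs every conjugate $h^{z^n}$ and hence is $z$-invariant; finiteness of $Y$ then yields a jump $C\lhd C^*$ with $C^*/C\leq\RR$ on which conjugation by $z$ acts, by Hion's lemma, as multiplication by a positive real $\lambda$, and the relation forces $A_w(\lambda)=0$. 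In other words, the nondegenerate ordered abelian quotient you need is manufactured from finite generation plus tidiness, and the case where it cannot be manufactured is precisely the case $A_w(1)=0$, where the theorem is true for free. Separately, and less seriously, your Hahn-embedding step is also shaky: the natural map $U\otimes_\QQ\RR\to\RR^k$ need not be injective, and the order-preserving extension of $\bar\phi$ to $U\otimes_\QQ\RR$ is not justified; this part is repairable by running your jump argument directly on the finite chain of convex $\QQ$-subspaces of $U$ and applying Hion's lemma on each Archimedean jump, with no tensoring at all.
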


In this result, $h,z$ are arbitrary elements of any bi-ordered group satisfying $w(h,z)=1$.
Theorem A may be compared with \cite[Theorem 1.4]{CR}.  It applies to knot groups and generalizes \cite[Theorem 1.1]{CR} in the $2$-generator case.

The main result in \cite{PR} asserts that, if the Alexander polynomial of a fibred knot has all its roots real and positive, then its group is bi-orderable.
It is obtained as a consequence of the following

\begin{nothm}[\cite{PR}, Theorem 2.6]
Let $\varphi$ be an automorphism of a free group $F$ of finite rank.
If all eigenvalues of the automorphism of the abelianization $F^{\rm{ab}}$ of $F$ induced by $\varphi$ are real and positive, then $F$ has a $\varphi$-invariant bi-ordering.
\end{nothm}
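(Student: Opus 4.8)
The plan is to build the ordering on $F$ from its lower central series $F=\gamma_1(F)\ge\gamma_2(F)\ge\cdots$, using the fact (Magnus, Witt) that a free group of finite rank is residually torsion-free nilpotent: $\bigcap_k\gamma_k(F)=1$, each factor $\gamma_k(F)/\gamma_{k+1}(F)$ is free abelian of finite rank, and the associated graded ring $\bigoplus_k\gamma_k(F)/\gamma_{k+1}(F)$ is the free Lie ring on $F^{\ab}=\gamma_1(F)/\gamma_2(F)$. Since $\varphi$ is an automorphism it preserves each term $\gamma_k(F)$, so it induces an automorphism $\varphi_k$ of each factor $\gamma_k(F)/\gamma_{k+1}(F)$, with $\varphi_1$ the given map on $F^{\ab}$.

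First I would record the standard passage from a central series with torsion-free abelian factors to a bi-order: after choosing a bi-order on each factor $\gamma_k(F)/\gamma_{k+1}(F)$, declare $g\neq1$ to be positive according to the sign of its image in $\gamma_m(F)/\gamma_{m+1}(F)$, where $m$ is the unique index with $g\in\gamma_m(F)\setminus\gamma_{m+1}(F)$. Because the series is central and has trivial intersection, this yields a bi-order on $F$. If moreover each chosen factor-order is $\varphi_k$-invariant, then the resulting positive cone is $\varphi$-invariant, and hence so is the bi-order. This reduces the theorem to producing, for every $k$, a $\varphi_k$-invariant bi-order on the free abelian group $\gamma_k(F)/\gamma_{k+1}(F)$.

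Next I would reduce all of these to one linear-algebra lemma. Since the graded Lie ring is free on $F^{\ab}$, its degree-$k$ component is a subquotient of the $k$-fold tensor power of $F^{\ab}$, on which $\varphi_k$ acts as the corresponding power of $\varphi_1$; hence every eigenvalue of $\varphi_k$ is a product of $k$ eigenvalues of $\varphi_1$. By hypothesis these are real and positive, so all eigenvalues of every $\varphi_k$ are real and positive. It therefore suffices to prove: if $\psi$ is an automorphism of a free abelian group $A$ of finite rank all of whose eigenvalues are real and positive, then $A$ admits a $\psi$-invariant bi-order. To see this, extend $\psi$ to $V=A\otimes_{\ZZ}\RR$; since the characteristic polynomial splits over $\RR$, there is a complete $\psi$-invariant flag $0=V_0\subset V_1\subset\cdots\subset V_n=V$ with $\dim V_i=i$, and $\psi$ acts on each line $V_i/V_{i-1}$ as multiplication by the positive eigenvalue $\lambda_i$. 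Ordering $V$ lexicographically along this flag (a nonzero $v$ is positive iff its image in $V_i/V_{i-1}\cong\RR$ is positive, where $i$ is least with $v\in V_i$) gives a bi-order on $V$ that $\psi$ preserves, precisely because each $\lambda_i>0$; restricting it to the subgroup $A$ gives the required $\psi$-invariant bi-order.

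I expect the main obstacle to be the passage from the ordered factors back to the ordered group: one must verify that the cone built from the factor-orders is closed under multiplication, and that $\varphi$-invariance of the factor-orders really forces $\varphi$-invariance of the global order. The centrality of the lower central series, together with the inclusions $[\gamma_i(F),\gamma_j(F)]\subseteq\gamma_{i+j}(F)$, is what makes this work, and the delicate point is the bookkeeping needed to track the unique leading factor of a product. By comparison, the eigenvalue computation and the flag construction are routine once the free-Lie-ring structure of the associated graded is invoked.
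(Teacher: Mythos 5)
Your proof is correct, but it takes a genuinely different route from the paper. The paper never argues this statement directly: it observes (via Remark \ref{rem}) that the statement is equivalent to bi-orderability of the semi-direct product $F\rtimes\langle\varphi\rangle$, and obtains that as a special case of the more general Theorem \ref{resnilptoorder}, whose proof must cope with an arbitrary residually (torsion-free nilpotent) kernel $K$ and an arbitrary bi-orderable abelian group $\Phi$ of operators. That proof first removes torsion, by factoring out the subgroups $T_n$ (torsion modulo $\gamma_n(K)$) and embedding $G$ into a Cartesian product of the quotients, and then inducts on nilpotency class, splicing in one central factor at a time via Remark \ref{rem}; the eigenvalue control comes from the equivariant surjection $\bigotimes_1^c(K/K')\to\gamma_c(K)$ together with Lemma \ref{vs}, which must treat a whole abelian group of commuting operators by inducting on eigenspaces. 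You instead exploit what is special to free groups: by Magnus--Witt the factors $\gamma_k(F)/\gamma_{k+1}(F)$ are free abelian of finite rank and $\bigcap_k\gamma_k(F)=1$, so there is no torsion to remove, no Cartesian-product step, and no induction on class --- the bi-order is built in one shot by the leading-factor rule, and the linear algebra is the easier single-operator case (a complete invariant flag). Two details are worth flagging. First, you do not actually need the free Lie ring or the embedding of its degree-$k$ part into the tensor algebra: the equivariant surjection $(F^{\rm ab})^{\otimes k}\to\gamma_k(F)/\gamma_{k+1}(F)$ induced by iterated commutators already shows that every eigenvalue of $\varphi_k$ is a product of $k$ eigenvalues of $\varphi_1$, since the eigenvalues of an equivariant quotient lie among those of the source; this quotient argument is exactly the one the paper uses. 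Second, $\varphi$-invariance of your order should be read as invariance in both directions; this is automatic because $\varphi^{-1}$ also preserves each $\gamma_k(F)$ and acts on each ordered factor as an order-isomorphism. In exchange for being tied to free groups and a single automorphism, your argument is shorter and self-contained; the paper's extra generality is not idle, since in Theorems \ref{B} and \ref{nm} the relevant normal subgroup is residually (torsion-free nilpotent) but far from free, and only the stronger Theorem \ref{resnilptoorder} applies there.
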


This statement is equivalent to the assertion that the semi-direct product $F\rtimes\langle \varphi \rangle$ (where $\varphi$ has infinite order or is trivial) is bi-orderable (see for example Remark \ref{rem}).  We shall use the following more general criterion for bi-orderability in our key results.

\begin{theorem}\label{resnilptoorder}  Let $G$ be an extension of a residually $($torsion-free nilpotent\/$)$ group K by a bi-orderable abelian group $\Phi$.
Suppose that the real vector space $V=K^{\rm ab}\otimes \RR$ is finite-dimensional, and that all eigenvalues of maps induced on $V$ by elements of $\Phi$ are positive real numbers.  Then $G$ is bi-orderable.
\end{theorem}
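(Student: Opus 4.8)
The plan is to build a bi-ordering of $G$ lexicographically from a bi-ordering of the abelian quotient $\Phi=G/K$ and a bi-ordering of $K$ that is invariant under conjugation by all of $G$. First I would record the reduction. If $P\subseteq K$ is the positive cone of a bi-order on $K$ with $gPg^{-1}=P$ for every $g\in G$, and $P_\Phi$ is the positive cone of any bi-order on the abelian group $\Phi$, then $P_G=\{g:\pi(g)\in P_\Phi\}\cup P$ (with $\pi\colon G\to\Phi$ the projection) is the positive cone of a bi-order on $G$. Checking that $P_G$ is a subsemigroup partitioning $G\setminus\{1\}$ into $P_G$ and $P_G^{-1}$ is routine, and conjugation-invariance of $P_G$ uses the $G$-invariance of $P$ on the kernel part together with the fact that $\Phi$ is abelian, so conjugation acts trivially on the $\Phi$-part. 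Thus everything reduces to constructing a $G$-invariant bi-order on $K$.

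For this I would use a characteristic central series of $K$ with torsion-free abelian factors. Let $\gamma_i=\gamma_i(K)$ and let $\Delta_i$ be the isolator of $\gamma_i$, that is, the preimage in $K$ of the torsion subgroup of the nilpotent group $K/\gamma_i$; each $\Delta_i$ is characteristic, $\Delta_i/\Delta_{i+1}$ is torsion-free abelian, and $[\Delta_i,\Delta_j]\subseteq\Delta_{i+j}$ by standard properties of isolators in nilpotent quotients. Because $K$ is residually torsion-free nilpotent one has $\bigcap_i\Delta_i=1$: any $x\ne1$ survives in some torsion-free nilpotent quotient $K/N$ of class $c$, whence $x\notin\Delta_{c+1}$. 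Since $\Delta_1=K$ gives $[\Delta_i,K]\subseteq\Delta_{i+1}$, the group $K$ acts trivially on each factor $\Delta_i/\Delta_{i+1}$, so the conjugation action of $G$ on this factor is a well-defined action of $\Phi$. Moreover $\bigoplus_i(\Delta_i/\Delta_{i+1})\otimes\RR$ is a graded real Lie algebra generated in degree $1$ by $(\Delta_1/\Delta_2)\otimes\RR=V$, on which $\Phi$ acts by graded Lie automorphisms; hence the eigenvalues of any $\phi\in\Phi$ on the degree-$i$ part are products of $i$ eigenvalues on $V$, and so remain real and positive.

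It then suffices to bi-order each finite-dimensional factor $A_i=\Delta_i/\Delta_{i+1}$ in a $\Phi$-invariant way and to assemble these orders lexicographically down the series; the condition $\bigcap_i\Delta_i=1$ guarantees a total, hence bi-, order on $K$, and $\Phi$-invariance of the factor orders gives $G$-invariance of the result. For a single factor I would prove the underlying linear-algebra lemma: a commuting family of operators on a finite-dimensional real space, each with only positive real eigenvalues, can be simultaneously triangularized over $\RR$, yielding a $\Phi$-invariant full flag $0=W_0\subset W_1\subset\cdots\subset W_n=A_i\otimes\RR$ on whose successive quotients each $\phi$ acts by a positive scalar. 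Ordering a vector by the sign of its top nonzero coordinate in a basis adapted to the flag then gives a translation-invariant total order preserved by $\Phi$, since multiplication by $\phi$ scales the top coordinate by a positive number; restricting this order to $A_i$ completes the factor step.

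The main obstacle is the passage from the hypothesis on $V$ to all the higher factors: one must set up the isolated lower central series with its torsion-free abelian factors and the commutator law $[\Delta_i,\Delta_j]\subseteq\Delta_{i+j}$, and verify that the associated graded real Lie algebra is generated in degree $1$ so that positivity of the spectrum propagates upward. By contrast, the ordering lemma for a single factor is a mild refinement of the free-group statement quoted from \cite{PR}, and the lexicographic assembly together with the extension by $\Phi$ are then formal.
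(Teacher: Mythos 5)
Your argument is correct, and it rests on the same three ingredients as the paper's proof: the lexicographic extension of a $G$-invariant bi-order on $K$ by a bi-order on $\Phi$ (this is exactly Remark \ref{rem}); an ordering lemma for a commuting family of operators with positive real spectrum (Lemma \ref{vs}, which you prove by simultaneous real triangularization rather than induction on dimension --- both work); and propagation of positivity to commutator factors via equivariant surjections from tensor powers of $V$ (the paper applies this to the $c$-fold commutator map onto $\gamma_c(K)$). The genuine difference is the global assembly. The subgroups you call $\Delta_n$ are the paper's $T_n$, but the paper uses them only to reduce to the torsion-free nilpotent case: since $\bigcap T_n=1$, the group $G$ embeds in the Cartesian product of the groups $G/T_n$, and Cartesian products and subgroups of bi-orderable groups are bi-orderable; it then inducts on the nilpotency class, splitting off at each stage the isolator $L$ of $\gamma_c(K)$, which is central by \cite[5.2.19]{R}. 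You never pass to the nilpotent case: you bi-order $K$ in a single lexicographic pass along the entire isolator series, using $\bigcap_i\Delta_i=1$, centrality of the factors (that is, $[\Delta_i,K]\leq\Delta_{i+1}$), and their torsion-freeness. What your route buys is a one-pass construction that avoids any appeal to closure of bi-orderability under Cartesian products and subgroups, and it makes the associated graded Lie algebra explicit; what it costs is heavier isolator technology --- you invoke $[\Delta_i,\Delta_j]\leq\Delta_{i+j}$ (P.\ Hall's isolator theorem), whereas the paper needs only the fact that in a torsion-free nilpotent group the isolator of a central subgroup is central. The two verifications you flag as the main obstacle do go through: $\Delta_i/\gamma_i(K)\Delta_{i+1}$ is a quotient of the torsion group $\Delta_i/\gamma_i(K)$, so after tensoring with $\RR$ the map from $(\gamma_i(K)/\gamma_{i+1}(K))\otimes\RR$ onto $(\Delta_i/\Delta_{i+1})\otimes\RR$ is surjective, and since the $\gamma$-graded Lie ring is generated in degree $1$ (cf.\ \cite[5.2.5]{R}), so is yours; this gives both the finite-dimensionality of every factor $(\Delta_i/\Delta_{i+1})\otimes\RR$ and the statement that its eigenvalues are products of $i$ eigenvalues on $V$, hence positive.
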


The case of Theorem B in which $\Phi$ is cyclic is essentially \cite[Proposition 3.4]{LRR}. The next theorem generalizes \cite[Proposition 4.4]{PR} in the $2$-generator case.

\begin{theorem}\label{B}
Let $G$ be a group with presentation $\langle x,t\mid w\rangle$, where $w$ is a monic word in the free group on $x,t$.
If all roots of $A_w(X)$ are real and positive, then $G$ is bi-orderable.
\end{theorem}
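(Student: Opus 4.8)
The plan is to exhibit $G$ as an ascending HNN extension of a finitely generated free group and then to apply Theorem~\ref{resnilptoorder}, taking for the base the normal subgroup $K$ generated by $x$ and for $\Phi$ the infinite cyclic group $\langle t\rangle$, which is bi-orderable abelian. Writing $a_j=x^{t^{j}}$ for $j\in\ZZ$, we have $K=\langle a_j\mid j\in\ZZ\rangle$; conjugation by $t$ sends $a_j$ to $a_{j+1}$, and the defining relation becomes $a_{d_1}^{m_1}\cdots a_{d_r}^{m_r}=1$ together with all of its $t$-conjugates. Set $d=d_w-e_w=\deg A_w$.

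First I would appeal to the Magnus theory of one-relator groups whose relator has $t$-exponent sum zero (the Freiheitssatz and the resulting HNN decomposition; see \cite{LS}). Because $w$ is monic, $a_{d_w}$ occurs exactly once in the relator and with exponent $+1$, so a Tietze transformation eliminates it, expressing $a_{d_w}$ as an explicit word in $a_{e_w},\dots,a_{d_w-1}$. This shows that $A:=\langle a_{e_w},\dots,a_{d_w-1}\rangle$ is free of rank $d$ and presents $G$ as the ascending HNN extension of $A$ with stable letter $t$ and injective endomorphism $\phi\colon A\to A$, $a_j\mapsto a_{j+1}$, where $\phi(a_{d_w-1})=a_{d_w}$ is the eliminating word. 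Thus $K=\bigcup_{n\geq0}t^{n}At^{-n}$ is an ascending union of free groups of rank $d$, each inclusion being a copy of $\phi$; on abelianizations $\phi$ induces the companion matrix $M$ of $A_w$, and $\det M=\pm A_w(0)\neq0$.

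The heart of the matter is to prove that $K$ is residually torsion-free nilpotent. In the monic case the base $A$ is free and no adjunction of roots is required, so I would argue directly; it is the more general principal (non-monic) situation, where the relator determines only a proper power of $a_{d_w}$, that calls for the extension of G.\ Baumslag's theory of root adjunction. Here I proceed as follows. Lower central series commute with directed unions, so $\gamma_m(K)=\bigcup_n t^{n}\gamma_m(A)t^{-n}$ and $K/\gamma_m(K)$ is the ascending union of the groups $A/\gamma_m(A)$ under the maps induced by $\phi$. It therefore suffices that each induced map $A/\gamma_m(A)\to A/\gamma_m(A)$ be injective: then $K/\gamma_m(K)$ is an ascending union of torsion-free nilpotent groups, hence torsion-free nilpotent, and any nontrivial element of $K$, lying in some $t^{n}At^{-n}$ and surviving in a nilpotent quotient of that free group, survives in $K/\gamma_m(K)$ for suitable $m$. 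Injectivity on each quotient follows by induction from injectivity of $\phi$ on each graded piece $\gamma_m(A)/\gamma_{m+1}(A)$, which is the degree-$m$ component of the free Lie ring on $d$ generators; on it $\phi$ acts as the degree-$m$ part of the Lie-algebra endomorphism induced by $M$. Since $\det M\neq0$, $M$ is invertible over $\QQ$, the induced endomorphism of the rational free Lie algebra is an automorphism, and so $\phi$ is injective in each degree, the graded pieces being torsion-free.

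Finally I would check the quantitative hypotheses of Theorem~\ref{resnilptoorder}. As $A_w$ is monic of degree $d$, the discussion following $(\ast)$ identifies $V=K^{\ab}\otimes\RR$ as a $d$-dimensional real space on which conjugation by $t$ acts with characteristic polynomial $A_w$; hence $V$ is finite-dimensional and the eigenvalues of $t$---and so of every element of $\Phi$---are precisely the roots of $A_w$, which are real and positive by hypothesis. With $K$ residually torsion-free nilpotent, $\Phi$ bi-orderable abelian, $V$ finite-dimensional, and all these eigenvalues positive real, Theorem~\ref{resnilptoorder} gives that $G$ is bi-orderable. I expect the genuinely delicate points to be the residual torsion-free nilpotence of $K$ and the clean Freiheitssatz/HNN setup in the monic case; note that positivity of the roots plays no role there and is needed only to supply the eigenvalue condition of Theorem~\ref{resnilptoorder}.
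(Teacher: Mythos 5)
Your proof is correct, and its outer frame is the same as the paper's: normalise so that $e_w=0$, use the Freiheitssatz to see that $A=\langle x_0,\dots,x_{d-1}\rangle$ is free of rank $d$, realise the normal closure $K$ of $x$ as the ascending union $\bigcup_{n\geq 0}t^nAt^{-n}$, and reduce via Theorem \ref{resnilptoorder} to proving that $K$ is residually $\Ntf$. Where you genuinely diverge is in that last step. The paper quotes Proposition \ref{laboursaving}, whose proof compares the torsion subgroups of the nilpotent quotients of $A$ and of $K$ by means of finite Pr\"ufer rank and torsion-free rank arguments (Lemma \ref{bigsubgroups}); you instead exploit freeness of $A$ directly: by Magnus--Witt the graded Lie ring $\bigoplus_m \gamma_m(A)/\gamma_{m+1}(A)$ is the free Lie ring on $A^{\rm ab}$, conjugation by $t$ induces on it the functorial extension of the companion matrix of $A_w$, and since that matrix is invertible over $\QQ$ (as $A_w(0)\neq 0$) the induced map is injective in every degree, the graded pieces being free abelian; an induction up the lower central series---worth writing out, it is a short five-lemma argument on $1\to\gamma_m(A)/\gamma_{m+1}(A)\to A/\gamma_{m+1}(A)\to A/\gamma_m(A)\to 1$---then shows that $\phi$ is injective on each $A/\gamma_m(A)$, so that $K/\gamma_m(K)$ is an honest ascending union of torsion-free nilpotent groups and $\bigcap_m\gamma_m(K)=1$. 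Your route is more self-contained and proves slightly more: $\gamma_m(K)\cap A=\gamma_m(A)$, so every lower central quotient of $K$ is itself torsion-free and $K$ is residually free-nilpotent. What the paper's machinery buys is portability: Proposition \ref{laboursaving} needs only a residually $\Ntf$ base whose abelianization has finite Pr\"ufer rank, which is exactly what survives in the non-monic principal case (Theorems \ref{nm} and \ref{nonmon}), where the base groups $V_{0,j}$ are merely parafree and your single-endomorphism Lie-ring argument does not transfer. One small imprecision, not a gap: the eigenvalues of a general element $t^n\in\Phi$ are the $n$th powers of the roots of $A_w$ rather than the roots themselves---still positive, which is all that Theorem \ref{resnilptoorder} requires.
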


There is a similar result for the case when $A_w(X)$ is principal but not monic.  It requires extra hypotheses that are easy to check.

\begin{theorem}\label{nm}
Let $G$ be a group with presentation $\langle x,t\mid w\rangle$, where $w$ is a principal word in the free group on $x,t$.
Let $A_w(X)=a_0+\dots+ a_{d-1}X^{d-1} -mX^d$ and assume that $\gcd\{ a_0,\dots,a_{d-1}\}=1$ and $a_{d-1}$ is not divisible by $m$.
If all roots of $A_w(X)$ are real and positive, then $G$ is bi-orderable.
\end{theorem}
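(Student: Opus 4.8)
The plan is to derive the result from Theorem~\ref{resnilptoorder}, applied to the extension $G=K\rtimes\langle t\rangle$ with $\Phi=\langle t\rangle\cong\ZZ$, which is abelian and bi-orderable. Writing $x_j=x^{t^j}$, the group $K$ is generated by $\{x_j:j\in\ZZ\}$ with defining relators the conjugates $R_n:=w^{t^n}$. As recorded in the introduction, $A_w(X)$ is an integer multiple of the characteristic polynomial of the map induced by conjugation by $t$ on $V=(K/K')\otimes\RR\cong\RR[t,t^{-1}]/(A_w(t))$; since $a_0\neq0$ this space has dimension $d=\deg A_w$, and the eigenvalues of $t$ are exactly the roots of $A_w$, all real and positive by hypothesis. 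Thus the conditions on $V$ in Theorem~\ref{resnilptoorder} hold, and the whole problem reduces to showing that $K$ is residually torsion-free nilpotent.

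For this I would use principality of $w$. After replacing $w$ by $w^{t^{-e_w}}$ (which alters neither $A_w$ nor $G$) I may assume $e_w=0$ and $d_w=d$, so each $R_n$ involves only $x_n,\dots,x_{n+d}$ and its top generator $x_{n+d}$ occurs exactly once, with exponent $m_k=-m$. Hence solving $R_n$ expresses $x_{n+d}^{\,m}$ as a word $c_n$ in $x_n,\dots,x_{n+d-1}$. By the Freiheitssatz the subgroup $L=\langle x_0,\dots,x_{d-1}\rangle$ is free of rank $d$, and passing up one index at a time exhibits $K^{+}=\langle x_j:j\ge0\rangle$ as the directed union of the groups $\langle x_0,\dots,x_N\rangle$ ($N\ge d-1$), each obtained from its predecessor by adjoining the $m$-th root $x_N$ of $c_{N-d}$. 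Equivalently $K^{+}$ is a tree product of $L$ with infinite cyclic groups, amalgamating $\langle c_n\rangle$ with $\langle x_{n+d}^{\,m}\rangle$. (This is the genuine novelty over the monic case of Theorem~\ref{B}, where $m=1$ and the ``adjunction'' degenerates into an endomorphism of $L$.)

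The decisive step is to show each such root adjunction preserves residual torsion-free nilpotence and that the tower is compatible, so that $K^{+}$, and finally $K=\bigcup_N t^{-N}K^{+}t^{N}$, remains residually torsion-free nilpotent; here I would invoke the paper's extension of G.~Baumslag's theorem on adjunction of roots. Its hypotheses I would check from the two arithmetic conditions. The image $\bar c_n$ in the abelianization of each stage has coordinate vector a cyclic shift of $(a_0,\dots,a_{d-1})$, so $\gcd\{a_0,\dots,a_{d-1}\}=1$ forces $\bar c_n$ to be primitive; equivalently $c_n$ is not a proper power, the amalgamated cyclic subgroup is isolated in $L$, and the abelianization of every stage stays torsion-free and equal to $\ZZ^d$. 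The condition $m\nmid a_{d-1}$ is what I expect to be needed to propagate isolation beyond the abelianization, i.e.\ to verify Baumslag's criterion in the higher nilpotent quotients so that every stage is in fact parafree of rank $d$. The common free nilpotent quotients then make the passage to the directed union, and the final recovery of $K$ from the automorphism $x_j\mapsto x_{j+1}$, go through.

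The hard part will be exactly this propagation. Two features demand care. First, isolation of $\langle c_n\rangle$ must be controlled not merely in the free group $L$ but in each successively enlarged, non-free factor group as the shift $x_j\mapsto x_{j+1}$ moves the relator up the tower; this is precisely where $m\nmid a_{d-1}$ should enter, and where the extension of Baumslag's work is genuinely used rather than his original statement. Second, the construction is asymmetric: principality makes the top generator of every relator unique, so building $K^{+}$ upward is a clean sequence of root adjunctions, whereas the bottom coefficient $a_0=\sum_{i\in\tau_0(w)}m_i$ may arise from several interspersed occurrences of $x_0$, so the downward direction is \emph{not} a root adjunction. I would sidestep this by constructing only $K^{+}$ by root adjunctions and then recovering all of $K$ from the $t$-conjugation automorphism, the remaining task being to confirm that the resulting ascending chain of isomorphic parafree groups of rank $d$ has compatible nilpotent quotients.
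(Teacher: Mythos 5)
Your overall strategy coincides with the paper's: reduce via Theorem \ref{resnilptoorder} to proving that the normal closure $K$ of $x$ is residually (torsion-free nilpotent), exhibit $K_+=\langle x_j \mid j\geq 0\rangle$ as an ascending union of stages, each obtained from the previous one by adjoining an $m$-th root of the shifted relator word (the paper's groups $V_{0,j}$), prove each stage is parafree of rank $d$, and recover $K$ from $K_+$ using the $t$-conjugation (the paper's Proposition \ref{laboursaving}). Your location of the hypotheses is also broadly right: $\gcd\{a_0,\dots,a_{d-1}\}=1$ gives indivisibility, and $m\nmid a_{d-1}$ is what controls indivisibility of the shifted relator in the enlarged, non-free stages (this is exactly Corollary \ref{0.9}).

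The genuine gap is that your decisive step is invoked as a black box which does not exist in the form you need: there is no one-step theorem, in Baumslag's work or in this paper, asserting that adjunction of an $m$-th root of an indivisible element preserves residual torsion-free nilpotence. The paper's actual mechanism is an interlocking induction that your plan omits entirely. One must factor $m=p_1^{n_1}\cdots p_l^{n_l}$ and adjoin roots one prime power $q_i=p_i^{n_i}$ at a time, forming $U_i=U_{i-1}*_{(y_{i-1}=y_i^{q_i})}\langle y_i\rangle$, because Baumslag's preservation theorem (Proposition \ref{fpresp}) concerns only residually finite $p$-groups and $p^n$-th roots. At each step the chain is: residually $\Ntf$ implies residually a finite $p_i$-group; Proposition \ref{fpresp} preserves this under the $q_i$-th root adjunction, \emph{provided} $\C(u)=\langle u\rangle$; then residual nilpotence together with the rank-$d$ abelianization (this is where $\gcd=1$ enters, via the Smith normal form condition, which must be verified for every stage $j$, not just the first relator) yields parafreeness by Lemma \ref{fppfr}, hence residual $\Ntf$-ness again, so the next prime can be processed. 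Moreover, the hypothesis that must be carried up the tower is not mere indivisibility but the centralizer condition $\C(u)=\langle u\rangle$; controlling centralizers in the non-free stages is precisely the purpose of Baumslag's class $\mathcal{D}_0$ and the closure results Proposition \ref{fpinD} and Corollary \ref{fpinD0}, none of which can be replaced by an abelianization computation. Two further corrections: $m\nmid a_{d-1}$ is used at the \emph{abelianization} level of $V_0$ (via the quotient map onto $\ZZ/m\ZZ$), to prove both indivisibility of $u(x_1,\dots,x_d)$ in $V_0$ and its non-conjugacy into $F_0$ (the latter is needed so that Lemma \ref{vind0} applies to the tree product), not in ``higher nilpotent quotients'' as you anticipate; and the compatibility of nilpotent quotients along the union, which you defer as ``the remaining task,'' requires a real argument --- the paper proves $\gamma_{c+1}(V_{0,j})=V_{0,j}\cap\gamma_{c+1}(V_{0,k})$ from parafreeness via finite-index and torsion-free-rank considerations (Lemma \ref{bigsubgroups}). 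Without these ingredients the plan cannot be completed as written.
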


Because of Theorem \ref{resnilptoorder}, in order to prove Theorems \ref{B} and \ref{nm}, it will suffice to prove that in each case the normal subgroup $K$ generated by $x\in G$ is residually (torsion-free nilpotent).

We prove Theorems A, B and C in Sections 3, 4  and 5 respectively.  Theorem D is substantially harder to prove than Theorem C, and some prerequisite results on adjunction of solutions of equations over groups are given in Section 6;  this section can be read independently. A result that implies Theorem D is proved in the final section.

 \section{Examples and applications}


The conditions on words arising in Theorems C and D appear at first sight quite restrictive.  We begin with examples illustrating the need for some of the conditions, and then discuss applications of our results to knot groups. \medskip

We recall that a non-identity element $g$ of a group $G$ is a {\em generalized torsion element} (or a {\em \gt-element} for short) if some (non-empty) product of conjugates of $g$ is trivial.  Clearly bi-orderable groups have no \gt-elements; see \cite[p.\ 79]{MR}.

\begin{example} \label{e1}
{\rm Since $[a^m,b^n]$ is a product of conjugates of $[a,b]$ if $m,n>0$, the groups $G(m,n)=
\gp x,t\mid [x^m,t^n]\rp$  with $mn>1$ have $[x,t]$ as a \gt-element and so are not bi-orderable.
In this case, $A_w(X)=m(X^n-1)$ where $w=[x^m,t^n]$; so $A_w(X)$ is non-monic unless $m=1$ and has roots that are not positive real numbers unless $n=1$.}
\end{example}

\begin{example}\label{e4} {\rm If $a$, $b$ are elements of a bi-orderable group having non-trivial powers that commute, then $a$, $b$ commute.  Thus if $|m|>1$, $|n|>1$ and $w_1$, $w_2$ are tidy words that do not commute in the group $G=\langle x,t\mid w_1^{n}w_2^{-m} \rangle$, then $G$ is not bi-orderable. In particular, the Baumslag--Solitar groups $B(m,n):=\langle x,t\mid x^n(x^t)^{-m}\rangle$ with coprime $m,n\geq2$ are not bi-orderable.
Similarly, if $f(X)=b_0+\dots +b_{d-1}X^{d-1}$ is a non-zero primitive polynomial and $w=(x^{f(t)})^n(x^{t^d})^{-m}$ with coprime $m,n\geq 2$, then the group $G:=\langle x,t\mid w\rangle$ is not bi-orderable. This follows because $[x^{t^d},x^{f(t)}]\neq 1$, since
the subgroup $\langle x,x^t,\dots,x^{t^d}\rangle$ is isomorphic to $F\ast_{(x^{f(t)})^n=(x^{t^d})^m} \langle x^{t^d}\rangle$ where $F$ is the free group on generators $x,\dots,x^{t^{d-1}}$ (see the proof of the Freiheitssatz in \cite[Chapter IV, pp.\ 198--199]{LS}).  Note that $A_w(X)=nb_0+ \dots + nb_{d-1}X^{d-1} -mX^d$. Therefore the greatest common divisor condition in Theorem \ref{nm} cannot in general be removed. }  \end{example}

The next two examples indicate that the existence of one positive root does not suffice for bi-orderability.

\begin{example}
{\rm Let $G=\langle x,t\mid w\rangle$, where $w=(x^tx^m)^t(x^tx^m)^n$ for some non-zero $m,n\in \ZZ$; thus $A_w(X)=(X+m)(X+n)$.
We can write $G$ as an HNN extension with $\langle x,x^t\rangle$ a free rank $2$ subgroup of the base group (again by the proof of the Freiheitssatz, {\it op cit.}). So $x^tx^m, [x^t,x], x^tx^{-1}$ are all non-identity elements.
Hence $x^tx^m$ is a \gt-element if $n>0$.
If $n<0<m$, then $[x^t,x]^t=[(x^tx^m)^{-n}x^{-mt},x^t]$, a product of conjugates of $[x,x^t]=[x^t,x]^{-1}$ since $-n>0$;  so $[x^t,x]$ is a \gt-element.  Thus for these cases when the quadratic $A_w(X)$ has only one positive real root, the group $G$ has \gt-elements, and $G$ is not bi-orderable.
In particular, $\langle x,t\mid x^{t^2}x^{-1}\rangle$ is not bi-orderable since $x^{t^2}x^{-1}=(x^tx^{-1})^t(x^tx^{-1})$ and $x^tx^{-1}$ is a \gt-element.}
\end{example}

\begin{example}\label{e41}
{\rm Let $G:=\langle x,t\mid w\rangle$, where $w=x^{t^3}x^{-1}$.  Then $A_w(X)=X^3-1$. We have
$w=(x^tx^{-1})^{t^2}(x^tx^{-1})^t(x^tx^{-1})$; therefore $[t,x^{-1}]=x^tx^{-1}$ is a \gt-element and $G$ is not bi-orderable.}
\end{example}

\begin{qu} \rm Suppose
that $\langle x,t\mid w\rangle$ is bi-orderable. Must all roots of $A_w(X)$ be positive real numbers?
\end{qu}

\begin{qu} \rm
Let $w(x,t)$ be a tidy word that is not principal.
If all roots of $A_w(X)$ are positive real numbers, is the group $\langle x,t\mid w\rangle$ bi-orderable?
\end{qu}

\begin{qu} \rm
Is every generalized torsion-free one-relator group bi-orderable?
\end{qu}

For background material on knot groups, see \cite{Ro}.  If a knot group is a $2$-generator group and has a presentation
$\langle x,t\mid w\rangle$ with $w$ a tidy word in $x$, $t$, then, as noted in \S1, the Alexander polynomial of the knot
is equal to $A_w(X)$.
As an immediate consequence of Theorems \ref{A}, \ref{B} and \ref{nm}, we have

\begin{cor}\label{K}
Let $G$ be the group of a knot in $S^3$.  Suppose that $G$ has a presentation with two generators and one relation which is tidy.
\begin{enumerate}
\item[\rm(1)] If $G$ is bi-orderable, then its Alexander polynomial has a positive real root.
\item[\rm(2)] If a word defining $G$ is monic and all roots of the Alexander polynomial are
positive, then $G$ is bi-orderable.
\item[\rm(3)] Suppose that the word $w$ defining $G$ is principal, and that the Alexander polynomial has the form
$a_0+\dots+ a_{d-1}X^{d-1} -mX^d$ with  $\gcd\{ a_0,\dots,a_{d-1}\}=1$ and $a_{d-1}$ not divisible by $m$. If  all roots of the Alexander polynomial are
positive, then $G$ is bi-orderable.
\end{enumerate}\end{cor}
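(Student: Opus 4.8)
The plan is to deduce all three parts directly from Theorems~\ref{A}, \ref{B} and \ref{nm}, using the single extra fact recorded immediately before the statement: for a knot group given by a two-generator presentation $\langle x,t\mid w\rangle$ with $w$ tidy, the Alexander polynomial of the knot equals $A_w(X)$. Once this identification is in hand, every condition expressed in terms of the Alexander polynomial translates verbatim into the same condition on $A_w(X)$, and the corollary becomes a matter of matching hypotheses to those of the three theorems.

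For part~(1) I would fix a bi-order on $G$ and apply Theorem~\ref{A} with $h=x$ and $z=t$. The one point requiring an argument is that $x$ and $t$ are non-identity elements of $G$: since $w$ lies in $N\setminus N'$ it genuinely involves $x$, and for a non-trivial knot it must also involve $t$ (otherwise $w$ would be a power of $x$, forcing $G$ to be cyclic or to have torsion), so by the Freiheitssatz both $\langle x\rangle$ and $\langle t\rangle$ are infinite cyclic, and in particular $x\neq1\neq t$. Theorem~\ref{A} then produces a positive real root of $A_w(X)$, which is exactly the Alexander polynomial.

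Parts~(2) and~(3) require nothing beyond the translation. A monic defining word together with the hypothesis that all roots of the Alexander polynomial $A_w(X)$ are positive are precisely the hypotheses of Theorem~\ref{B}, whose conclusion is that $G$ is bi-orderable; and a principal defining word with $A_w(X)=a_0+\dots+a_{d-1}X^{d-1}-mX^d$, together with $\gcd\{a_0,\dots,a_{d-1}\}=1$, the non-divisibility of $a_{d-1}$ by $m$, and positivity of all roots, are precisely the hypotheses of Theorem~\ref{nm}. Invoking those theorems closes both cases.

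Since everything is a substitution of ``Alexander polynomial'' for $A_w(X)$, I expect the proof to be very short. The only non-formal step---and hence the only place a genuine obstacle could hide---is the verification in part~(1) that the generators are non-trivial: the degenerate unknot case (where one would have $x=1$, and the hypotheses of Theorem~\ref{A} legitimately fail) must be set aside, but for every non-trivial knot the argument above applies without further work.
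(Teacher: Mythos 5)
Your proposal is correct and follows exactly the route the paper intends: the paper states Corollary~\ref{K} as an ``immediate consequence'' of Theorems~\ref{A}, \ref{B} and~\ref{nm}, with the identification of the Alexander polynomial with $A_w(X)$ doing all the work, which is precisely your substitution argument. Your verification that $x\neq 1\neq t$ (via the Freiheitssatz, setting aside the degenerate unknot presentation $w=x$, where $x=1$ in $G$) is a point the paper leaves implicit, and it is handled correctly.
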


Assertion (1) had been conjectured, and (2) was proved in \cite{PR}  for the special case of fibred knots; in this case the Alexander polynomial is monic. Assertion (3), and a slightly more technical result proved as Theorem \ref{nonmon} below,
allow us to consider some knot groups for which the Alexander polynomial is not monic.

The relator for the $5_2$ knot is $x^{-3t}x^2x^{2t^2}$.
So its Alexander polynomial is $2X^2-3X+2$.
Since this has no real roots, the $5_2$ knot group is not bi-orderable.
Generalized torsion in knot groups is studied in Naylor and Rolfsen \cite{N}, and they
establish a stronger result by exhibiting a $\hat t$-element in the $5_2$ knot group.

Quartic monic Alexander polynomials of fibred knot groups have the form $X^4-aX^3+(2a-1)X^2-aX+1$ or $X^4-aX^3+(2a-3)X^2-aX+1$ where $a\in \ZZ$
(see \cite{PR}).   For
$a=5,4,3,2,0,-1,-2$ the polynomial of the first type has no positive real roots and so by Corollary \ref{K}(1) the corresponding group is not bi-orderable.  In the other cases
our results add nothing to what was previously known (see \cite{CR}).

Further applications of Corollary \ref{K} to knot groups can be found in \cite{CDN}.

\section{Proof of Theorem \ref{A}}

Recall that the convex subgroups of a bi-ordered group form a complete
chain from $\{1\}$ to $G$ under inclusion; and if $L$, $L^*$ are
consecutive convex subgroups in this chain with $L<L^*$, then
$L\triangleleft L^*$ and $L^*/L$ is isomorphic to a subgroup of $\RR$.
(See \cite[pp.\ 50--51]{F} or \cite[Lemmata 3.1.2, 3.1.3 and Corollary
4.2.5]{G99}.)

Write $w$ as in $(\ast)$ in the Introduction.
We may assume that $G=\langle h,z\rangle$.  Since $G$ is non-trivial and
finitely generated, it has a maximal
proper convex subgroup $L$, and $G/L$ is abelian.
Hence $h^{m}\in L$, where
$m=\sum m_i=A_w(1)$.
So either $A_w(1)=0$ and $1$ is a positive root of $A_w(X)$, or $h\in
L$.  We may assume the latter; thus  $L\neq\{ 1\}$ and $z\notin L$.

Let $Y:=\{h^{z^e},h^{z^{e+1}},\dots,
h^{z^{d-1}}\}$, where $e:=e_w$ and $d:=d_w$ are as defined in the Introduction.
Let $C^*$ be the smallest convex subgroup of $G$ containing $Y$.
Suppose that $n\geq d$ and $h^{z^i}\in C^*$ for $e\leq i<n$, but that $g\notin C^\ast$ where
$g=h^{z^n}$.
Let $D^\ast$, $D$ be respectively the smallest convex subgroup containing $g$ and the largest convex subgroup not containing $g$.
Then $C^*\leq D \triangleleft D^*$ and $D^\ast/D$ is abelian. We have
$w(h,z)^{z^{n-d}}=1\in D\lhd D^\ast$; but the element on the left is a product of conjugates of $h$ that lie in $D$ and powers of $g$ whose product is $g^{m'}$ where $m'=\sum_{d_j=d}m_j\neq0$.
Thus $g^{m'}\in D$. Since $D^\ast/D$ is torsion-free and $g\in D^\ast \setminus D$, this is a contradiction.
We conclude that $h^{z^n}\in C^*$ for all $n\geq e$.  A
similar argument shows that $h^{z^n}\in C^*$ for all $n \leq d$.
Therefore $C^*\triangleleft G$, and it follows that $C^*=L$.

Since $Y$ is finite, there is a largest convex subgroup $C$ of $G$ not containing $Y$.  Thus $C$ is the unique maximal convex subgroup strictly contained in $C^\ast$, and so $C\triangleleft G$ and $C^\ast/C$ is isomorphic to a subgroup of $\RR$.
By Hion's Lemma \cite[Theorem 1.5.1]{MR}, conjugation by $z$ must act on
$C^*/C$ as multiplication by a positive real number,
$\lambda$ say.  Our relation $(\ast)$ satisfied by $h$, $z$ holds for the
images $\bar h$, $\bar z$ modulo $C$.
In module notation this becomes
$$0=\bar h A_w(\bar z)= \bar h A_w(\lambda).$$
The theorem follows. \hfill $\Box$

\section{Proof of Theorem \ref{resnilptoorder}}

We begin with a remark and an elementary result.

\begin{remark} \label{rem}  {\em Suppose that $K\triangleleft G$, that $G/K$ has a bi-order $<_1$ and that $K$ has a bi-order $<_2$ with the property that $k_1<_2k_2$ and $g\in G$ implies $k_1^g\leq_2 k_2^g$ for all $k_1,k_2\in K$ and all $g\in G$.  For $g_1,g_2\in G$ write
$g_1<g_2$ if either $g_1K<_1g_2K$ or $g_1K=g_2K$ and $1<_2g_1^{-1}g_2$.  Then $<$ is a bi-order on $G$.}  \end{remark}

\begin{lem}  \label{vs} {\rm (cf.\ \cite[Proposition 2.2]{LRR})} Let $V$ be a finite-dimensional real vector space and $\Phi$ be an abelian group of linear maps all of whose eigenvalues are real and positive.  Then $V$ has a bi-order $<$ preserved by $\Phi$.\end{lem}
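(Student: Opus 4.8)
The plan is to reduce the statement to a simultaneous triangularization of the action of $\Phi$ and then read off a lexicographic order from the resulting flag. First I would produce a complete $\Phi$-invariant flag $0=V_0\subset V_1\subset\dots\subset V_n=V$ with $\dim V_i=i$, each $V_i$ invariant under $\Phi$, such that $\Phi$ acts on each one-dimensional quotient $V_i/V_{i-1}$ by multiplication by a \emph{positive} scalar. The key point is the existence of a common eigenvector: the $\RR$-subalgebra $\mathcal A$ of $\mathrm{End}(V)$ generated by $\Phi$ is commutative and, being contained in the finite-dimensional algebra $\mathrm{End}(V)$, is generated by finitely many members $\phi_1,\dots,\phi_m$ of $\Phi$. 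Because every $\phi_i$ has all its eigenvalues real, a standard inductive argument (take a real eigenspace of $\phi_1$, which is invariant under the commuting $\phi_2,\dots,\phi_m$, restrict, and repeat) yields a vector that is a common eigenvector of $\phi_1,\dots,\phi_m$, hence of all of $\mathcal A$ and so of all of $\Phi$; the associated eigenvalue of each $\phi\in\Phi$ is one of its eigenvalues on $V$ and is therefore positive. Setting $V_1$ to be the span of this vector and passing to $V/V_1$ (on which $\Phi$ again acts with only positive real eigenvalues, since the characteristic polynomial on $V$ factors through that on $V_1$), induction on $\dim V$ completes the construction of the flag.

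Next I would choose a basis $e_1,\dots,e_n$ of $V$ adapted to the flag, with $e_i\in V_i\setminus V_{i-1}$, and write each $v\in V$ as $v=\sum_i c_i(v)e_i$. I define $v>0$ to mean that $c_k(v)>0$, where $k$ is the largest index with $c_k(v)\neq 0$. This is the reverse-lexicographic order read from the top coordinate, and it is routine to check that it is a bi-order on the additive group $V$: it is a total order, and invariance under translation follows because the top nonzero coordinate of $v+v'$ is governed by those of the summands.

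Finally I would verify that this order is preserved by $\Phi$. With respect to the adapted basis every $\phi\in\Phi$ is upper triangular, since each $V_i$ is invariant, with diagonal entries the positive scalars $\lambda_1,\dots,\lambda_n$ by which $\Phi$ acts on the quotients; thus $\phi(e_i)$ lies in $V_i$ with $e_i$-coefficient $\lambda_i$. If $v>0$ has top nonzero coordinate at index $k$, then $\phi(v)=\sum_{i\le k}c_i(v)\phi(e_i)$ lies in $V_k$ and its coordinate at index $k$ equals $\lambda_k c_k(v)>0$, so $\phi(v)>0$. Hence $\Phi$ preserves $<$, as required. I expect the only real work to be the simultaneous-triangularization step; the reduction to a finitely generated subalgebra and the check that positivity of the eigenvalues is inherited by the quotients are the delicate points there, while the construction and verification of the order itself are routine.
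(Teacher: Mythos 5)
Your proof is correct, but it follows a more constructive route than the paper's. The paper argues by induction on $\dim V$: if every element of $\Phi$ acts as a scalar, the claim is immediate; otherwise some $\varphi\in\Phi$ has an eigenspace $W$ with $0<W<V$, which is $\Phi$-invariant because $\Phi$ is abelian, and the bi-orders obtained inductively on $W$ and $V/W$ (whose eigenvalues are again real and positive) are glued together using the order-extension Remark \ref{rem}. In particular, the paper never needs a common eigenvector for all of $\Phi$; one eigenspace of a single non-scalar element per induction step suffices, and the subquotients need not be one-dimensional. You instead prove the stronger statement that $\Phi$ can be simultaneously triangularized over $\RR$, which forces you to establish the common-eigenvector lemma together with the reduction to finitely many $\phi_i$ (your justification is valid: since $\Phi$ is a group, the algebra it generates is its linear span, so finitely many members of $\Phi$ span and hence generate it), and you then write the order down explicitly as the reverse-lexicographic order for a basis adapted to the flag, verifying $\Phi$-invariance directly rather than citing an extension lemma. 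The paper's argument is shorter and reuses Remark \ref{rem}, which it needs elsewhere anyway; yours isolates a concrete, reusable linear-algebra fact (commuting operators with real spectra are simultaneously triangularizable over the reals) and yields an explicit description of the order. Both arguments turn on the same key observation that commuting maps preserve each other's eigenspaces, and your treatment of the delicate points --- the finite-generation reduction, and the inheritance of positive eigenvalues by invariant subspaces and quotients via factorization of characteristic polynomials --- is sound.
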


\begin{proof}  We argue by induction on $\dim V$. The result is clear if $V=\{0\}$.  The result is also clear if each element of $\Phi$ acts as multiplication by some real number.  Otherwise, some element
$\varphi$ of $\Phi$ has an eigenspace $W$ with $0<W<V$, and $W$ is a $\Phi$-invariant subspace since $\Phi$ is abelian.
Now
the eigenvalues of the maps on $W$, $V/W$ induced by the elements of $\Phi$ are real and positive, and therefore both $W$ and $V/W$ have suitable bi-orders by induction.  Hence so does $V$ by Remark \ref{rem}. \end{proof}

We now prove Theorem \ref{resnilptoorder}.  As usual we write $\ga_n(K)$ for the $n$th term of the lower central series of a group $K$.

\begin{proof} Suppose that $G$, $K$, $V$ and $\Phi$ are as in the statement of the theorem. For $\varphi\in \Phi$, let $\bar\varphi$ be the linear map induced on $V$ by $\varphi$.
For each $n$ let $T_n/\gamma_n(K)$ be the torsion subgroup of $K/\gamma_n(K)$ and
write $G_n=G/T_n$.  Since $\bigcap T_n=1$, the group $K$ embeds in the Cartesian product of the groups $K/T_n$.  However Cartesian products of bi-orderable groups are bi-orderable. So to prove the result, we may assume that $K$ is nilpotent, of class $c$, say.

If $c=1$ then Lemma \ref{vs} shows that $K$ has a bi-order $<$ with the property that $k_1<k_2$ if and only if $k_1\varphi<k_2\varphi$ for each $\varphi\in\Phi$, and so $G$ has a bi-order
with the desired property by Remark \ref{rem}.  For $c>1$ we may assume by induction that $G/L$ has a suitable bi-order, where $L/\gamma_c(K)$ is the torsion subgroup of $G/\gamma_c(K)$.  However $L$ is abelian (see \cite[5.2.19]{R}) and $L\otimes \RR= \gamma_c(K)\otimes\RR$.  The commutator map from the product of $c$ copies of $K$ to $\gamma_c(K)$ induces a $c$-linear map from the product of $c$ copies of $K/K'$ to $\gamma_c(K)$ and a surjective map from $\bigotimes_1^c (K/K')$ to $\gamma_c(K)$ (cf.\ \cite[5.2.5]{R}); this is a homomorphism of $\ZZ \Phi$-modules, where
$\Phi$ acts diagonally on the tensor power.  On tensoring with $\RR$, we obtain a surjective homomorphism of $\RR \Phi$-modules.  Since the eigenvalues of the maps induced by the elements of $\Phi$ on the tensor product are products of $c$ (not necessarily distinct) eigenvalues of elements $\varphi\in\Phi$, they are all positive.

By Lemma \ref {vs}, the group $L\otimes\RR=\gamma_c(K)\otimes\RR$ can be given a bi-order $<$ invariant under the maps $\varphi\otimes 1$. This gives a bi-order $<$ on $L$ such that $u_1\varphi<u_2\varphi$ if and only if $u_1<u_2$, for all $u_1,u_2\in L$ and all $\phi\in\Phi$.  Since $L$ is in the centre of $K$, by Remark \ref{rem} we can therefore combine this bi-order with the bi-order on $G/L$ to obtain a bi-order on $G$, as required.  \end{proof}

\section{Proof of Theorem \ref{B}}

We shall write $\Ntf$ for the class of torsion-free nilpotent groups.  We recall that a group is said to have {\em finite Pr\"ufer rank} if there is an integer $r$ such that every finitely generated subgroup can be generated by $r$ elements; its {\em Pr\"ufer rank} is the smallest such $r$ (see \cite[Exercises 14.1, 1--4]{R}).  Clearly
no subgroup of such a group has an infinite abelian quotient of finite exponent.  Moreover if $G$ is a soluble such group then $G$ has a finite subnormal series in which each factor is either infinite cyclic or a torsion group; the number of infinite cyclic factors is the {\em torsion-free rank} of $G$.
We recall some elementary facts.

\begin{lem}\label{bigsubgroups} \begin{enumerate} \item[\rm (a)]   Let $G$ be a group.
\begin{enumerate}\item[\rm(i)] If $G/\gamma_2(G)$ can be generated by $r$ elements then so can
$G/\gamma_n(G)$ for all $n\geq 2$.
\item[\rm(ii)] If $G/\gamma_2(G)$ has finite Pr\"ufer rank then so does $G/\gamma_n(G)$ for all $n\geq 2$. \end{enumerate}
\item[\rm(b)] Suppose that $H$ has finite Pr\"ufer rank and $L$ is a subgroup such that $|H\colon LH'|$ is finite. Then $|H\colon L\gamma_{n+1}(H)|$ is finite for all $n>0$.

 In particular, if in addition $H$ is nilpotent, then $|H\colon L|$ is finite. \end{enumerate}  \end{lem}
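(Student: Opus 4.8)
The plan is to prove all three parts by induction on $n$, using throughout the standard surjection of abelian groups
$$\mu_n\colon \textstyle\bigotimes^n (G/\gamma_2(G)) \longrightarrow \gamma_n(G)/\gamma_{n+1}(G),\qquad \bar g_1\otimes\cdots\otimes\bar g_n\mapsto [g_1,\dots,g_n]\gamma_{n+1}(G),$$
induced by the iterated commutator map (this is the map already used in the proof of Theorem \ref{resnilptoorder}; cf.\ \cite[5.2.5]{R}). Part (a)(i) does not actually need $\mu_n$: I would use the elementary fact that if $g_1,\dots,g_r$ generate $G$ modulo $\gamma_2(G)$ then, with $H=\langle g_1,\dots,g_r\rangle$, one has $\gamma_k(G)=\gamma_k(H)\gamma_{k+1}(G)$ for all $k$, whence $G=H\gamma_n(G)$ and so $G/\gamma_n(G)$ is generated by the images of $g_1,\dots,g_r$.

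For (a)(ii) I would first record two closure properties of the class of groups of finite Pr\"ufer rank. Quotients clearly preserve finite Pr\"ufer rank, and the tensor product of two abelian groups of finite Pr\"ufer rank again has finite Pr\"ufer rank: a finitely generated subgroup of $A\otimes B$ lies in $A_0\otimes B_0$ for finitely generated $A_0\le A$, $B_0\le B$, and if $A_0,B_0$ are generated by $r(A),r(B)$ elements then $A_0\otimes B_0$ is generated by $r(A)r(B)$ elements. Iterating, $\bigotimes^n(G/\gamma_2(G))$ has finite Pr\"ufer rank, hence so does its image $\gamma_n(G)/\gamma_{n+1}(G)$ under $\mu_n$. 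The remaining point is that finite Pr\"ufer rank is closed under central extensions: if $Z\le Z(B)$ and both $Z$ and $B/Z$ have finite Pr\"ufer rank, then for finitely generated $U\le B$ the image of $U$ in $B/Z$ is boundedly generated; lifting those generators to $u_1,\dots,u_q\in U$ and writing each of the finitely many given generators of $U$ as a $u$-word times a central element of $U\cap Z$, the latter central elements lie in a finitely generated, hence boundedly generated, subgroup of $Z$, so $U$ is boundedly generated. Applying this to the central series of $G/\gamma_{n+1}(G)$ with factors $\gamma_i(G)/\gamma_{i+1}(G)$ completes the induction.

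Part (b) is the heart of the matter. I would again induct on $n$, the case $n=1$ being the hypothesis that $|H:LH'|=:k$ is finite. Set $Q=H^{\ab}$ and $P_n=\gamma_n(H)/\gamma_{n+1}(H)$. The hypothesis says $\bar L:=LH'/H'$ has index $k$ in $Q$, so $kQ\le\bar L$. Since $kq_1\otimes\cdots\otimes kq_n=k^n(q_1\otimes\cdots\otimes q_n)$ in the tensor power, the element $k^n\mu_n(q_1\otimes\cdots\otimes q_n)=\mu_n(kq_1\otimes\cdots\otimes kq_n)$ lies in the subgroup $M\le P_n$ generated by all $\mu_n(\bar\ell_1\otimes\cdots\otimes\bar\ell_n)=[\ell_1,\dots,\ell_n]\gamma_{n+1}(H)$ with $\ell_i\in L$; as the images of pure tensors generate $P_n$, this gives $k^nP_n\le M$, so $P_n/M$ has exponent dividing $k^n$. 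Because each $[\ell_1,\dots,\ell_n]$ lies in $L\cap\gamma_n(H)$, the subgroup $M$ maps to the identity under the natural surjection $P_n\to L\gamma_n(H)/L\gamma_{n+1}(H)$; hence this last group is a quotient of $P_n/M$ and so has finite exponent, while being also a quotient of $P_n$ it has finite Pr\"ufer rank by (a)(ii). An abelian group of finite exponent and finite Pr\"ufer rank is finite, so $|L\gamma_n(H):L\gamma_{n+1}(H)|<\infty$; combined with $|H:L\gamma_n(H)|<\infty$ from the inductive hypothesis this yields $|H:L\gamma_{n+1}(H)|<\infty$. Taking $n$ to be the nilpotency class when $H$ is nilpotent gives $|H:L|<\infty$.

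I expect the main obstacle to be part (b), and within it the passage from \emph{$L$ covers $Q$ up to finite index} to \emph{$L$ covers $P_n$ up to finite exponent}: this is exactly where the multilinearity of $\mu_n$ and the inclusion $kQ\le\bar L$ must be combined correctly, and where one must observe that iterated commutators of elements of $L$ remain inside $L$. The other delicate point is the central-extension closure of finite Pr\"ufer rank in (a)(ii); the generator count there is elementary but must be arranged so as to avoid any appeal to finite generation of $U\cap Z$, which can fail.
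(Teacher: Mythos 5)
Your proposal is correct and follows essentially the same route as the paper's proof: the multilinear (iterated commutator) map into $\gamma_n/\gamma_{n+1}$ from the tensor power of the abelianization, the $rs$-generator bound for tensor products of abelian groups of finite Pr\"ufer rank, extension-closure of finite rank, and, for (b), the fact that the relevant abelian section has exponent dividing a power of $k=|H\colon LH'|$ and finite rank, hence is finite, followed by the same index multiplication. The only cosmetic differences are that you work with the full tensor power and the quotients $L\gamma_n(H)/L\gamma_{n+1}(H)$ where the paper first reduces to the case $\gamma_{n+1}(H)=1$ (so that $\gamma_n(H)$ is central and one can use $\gamma_n(H)/(L\cap\gamma_n(H))$ directly), and that you supply a proof of the (central) extension-closure of finite Pr\"ufer rank, which the paper simply invokes as a known closure property.
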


\begin{proof} (a)(i)  It suffices to prove that if $G=H\gamma_n(G)$ for some subgroup $H$ and integer $n\geq2$ then $G=H\gamma_{n+1}(G)$.  Fix $H,n$ and write bars for images of subgroups modulo $\gamma_{n+1}(G)$.  Since $\overline{\gamma_n(G)}$ is central we have $\overline G '=\overline H '$, and hence $G'\leq H\gamma_{n+1}(G)$ and $G=HG'=H\gamma_{n+1}(G)$.

(ii)  Since the class of groups of finite Pr\"ufer rank is closed with respect to extensions it suffices to prove that if $n\geq2$ and $G/\gamma_n(G)$ has finite Pr\"ufer rank then so has
$\gamma_n(G)/\gamma_{n+1}(G)$.  This group is
the image of $\gamma_{n-1}(G)/\gamma_n(G)\otimes G/\gamma_2(G)$ under the map induced by the commutator map from $\gamma_{n-1}(G)\times G$ to $\gamma_n(G)/\gamma_{n+1}(G)$; see \cite[pp.\ 126--127]{R}.  However a tensor product of an abelian $r$-generator group and an abelian $s$-generator group can be generated by $rs$ elements, and it follows immediately
that a tensor product of two abelian groups of finite Pr\"ufer rank has itself finite Pr\"ufer rank.  The result follows.

(b)  It suffices to assume that $|H\colon L\gamma_n(H)|$
is finite for some $n\geq2$ and prove that $|H\colon L\gamma_{n+1}(H)|$ is finite; in proving this we can also assume that $\gamma_{n+1}(H)=1$.

Let $m=|H\colon LH'|$.  The $n$-fold commutator map $(x_1,\dots,x_n)\mapsto [x_1,\dots,x_n]$
from $H\times\cdots\times H$ to $\gamma_n(G)$ is a homomorphism in each of its $n$ variables and it induces an $n$-linear map from $H/H'\times\cdots\times H/H'$ to $\gamma_n(H)$.  It follows that $[x_1,\dots,x_n]^{m^n}=[x_1^m,\dots,x_n^m]\in L$ for all
$x_1,\dots,x_n\in H$.  Thus $\gamma_n(H)/(L\cap \gamma_n(H))$ is an abelian group of finite exponent. But $H$ has finite Pr\"ufer rank, and so $\gamma_n(H)/(L\cap\gamma_n(H))$ is finite.
Therefore
$$|H\colon L|=|H\colon L\gamma_n(H)|\,|L\gamma_n(H)\colon L|=|H\colon L\gamma_n(H)|\,|\gamma_n(H)\colon(L\cap\gamma_n(H))|$$
is finite,  as required. \end{proof}

\begin{remark}\label{tsec}
{\rm
Note that the hypothesis that $|H\colon LH'|$ is finite in (b) above holds if $H$ is obtained from $L$ by adjunction of solutions of finitely many one-variable group equations over $K$ with non-zero weight in the variable.}
\end{remark}

\begin{prop}\label{laboursaving} Let $G=K\rtimes\langle t\rangle$ and let $M$ be a subgroup such that $M^t\leq M$ and $K=\bigcup_{n\in\ZZ}M^{t^n}$.  Suppose that \begin{enumerate}
\item[\rm(i)]  $M^\ab$ has finite Pr\"ufer rank, and
\item[\rm(ii)]  $M$ is generated by $M^t$ and finitely many elements satisfying one-variable
group equations over $M^t$ with non-zero weight in the variable. \end {enumerate}
If $M$ is residually $\Ntf$ then so is $K$. \end{prop}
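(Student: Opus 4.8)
\section*{Proof proposal for Proposition \ref{laboursaving}}

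The plan is to exploit the fact that the inclusion $M^t\leq M$ makes $K$ an ascending union $K=\bigcup_{n\geq 0}M^{t^{-n}}$, and to compare the isolated lower central series of $K$ with that of $M$. For a group $H$ and $c\geq 1$ write $D_c(H)=\{h\in H:h^k\in\gamma_c(H)\text{ for some }k\geq 1\}$ for the isolator of $\gamma_c(H)$. Since $H/\gamma_c(H)$ is nilpotent, its torsion elements form a subgroup, so $D_c(H)$ is a fully characteristic subgroup with $H/D_c(H)\in\Ntf$; moreover $H$ is residually $\Ntf$ if and only if $\bigcap_c D_c(H)=1$. It therefore suffices to show $\bigcap_c D_c(K)=1$. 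Since $D_c(K)$ is characteristic, hence invariant under conjugation by $t$, and since each $g\in K$ satisfies $g^{t^n}\in M$ for some $n\geq 0$, we may assume $1\neq g\in M$. As $M$ is residually $\Ntf$, there is $c$ with $g\notin D_c(M)$; I shall show $g\notin D_c(K)$ for this $c$.

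The first ingredient is that $\gamma_c$ commutes with ascending unions and with conjugation, so $\gamma_c(K)=\bigcup_{n\geq 0}\gamma_c(M)^{t^{-n}}$, again an ascending union (using $\gamma_c(M)^t=\gamma_c(M^t)\leq\gamma_c(M)$). Consequently, if $g\in D_c(K)$, say $g^N\in\gamma_c(K)$, then $g^N\in\gamma_c(M)^{t^{-n}}$ for some $n\geq 0$, that is $(g^{t^n})^N\in\gamma_c(M)$, whence $g^{t^n}\in D_c(M)$ (note $g^{t^n}\in M$). It remains to descend from $g^{t^n}\in D_c(M)$ to $g\in D_c(M)$, which is the crux of the argument.

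Conjugation by $t$ restricts to an endomorphism $\phi$ of $M$ with image $M^t$, and since $\gamma_c(M)$ is fully invariant it induces an endomorphism $\bar\phi$ of the nilpotent group $Q:=M/\gamma_c(M)$ whose image is $M^t\gamma_c(M)/\gamma_c(M)$. Here the hypotheses enter. By Remark \ref{tsec}, condition (ii) gives $|M:M^tM'|<\infty$, while condition (i) together with Lemma \ref{bigsubgroups}(a)(ii) shows that $Q$ has finite Pr\"ufer rank; applying Lemma \ref{bigsubgroups}(b) inside the nilpotent finite-rank group $Q$, with $L$ the image of $M^t$, then yields $|Q:\bar\phi(Q)|<\infty$. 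Since $Q$ has finite torsion-free rank and $Q/\ker\bar\phi\cong\bar\phi(Q)$ has the same torsion-free rank as $Q$, the kernel $\ker\bar\phi$ has torsion-free rank $0$ and so is torsion. Hence $\bar\phi$ pulls torsion back to torsion: if $\bar\phi(\bar a)$ has finite order then $\bar a^{k}\in\ker\bar\phi$ is torsion for some $k$, so $\bar a$ is torsion. Iterating, the same holds for $\bar\phi^{\,n}$; applying this to $\overline{g^{t^n}}=\bar\phi^{\,n}(\bar g)$, which is torsion in $Q$, we conclude $\bar g$ is torsion, i.e. $g\in D_c(M)$, contradicting the choice of $c$. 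Thus $g\notin D_c(K)$, as required.

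The main obstacle is exactly this final descent: converting the two hypotheses, via Remark \ref{tsec} and Lemma \ref{bigsubgroups}, into the statement that conjugation by $t$ induces a finite-index endomorphism on every nilpotent quotient $M/\gamma_c(M)$, and then using finiteness of the torsion-free rank to force both its kernel and hence the full preimage of the torsion subgroup to be torsion. The bookkeeping with the ascending union and the full invariance of $\gamma_c(M)$ is routine by comparison; it is the control of indices in finite-rank nilpotent groups supplied by Lemma \ref{bigsubgroups} that makes the descent possible.
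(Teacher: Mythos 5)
Your proof is correct and follows essentially the same route as the paper: both reduce residual torsion-free nilpotence to the triviality of $\bigcap_c$ of the isolators of the lower central terms, exploit the ascending union $K=\bigcup_{n\geq 0}M^{t^{-n}}$, and make the crucial descent from $M$ back along conjugation by $t$ by combining Remark \ref{tsec} and Lemma \ref{bigsubgroups} with additivity of torsion-free rank in nilpotent groups of finite Pr\"ufer rank. The only difference is packaging: you phrase the descent as ``the endomorphism $\bar\phi$ induced on $M/\gamma_c(M)$ has finite-index image and torsion kernel, hence pulls torsion back to torsion,'' whereas the paper proves the equivalent subgroup identity $R_c\cap M^t=R_c^t$ (with $R_c$ the isolator of $\gamma_{c+1}(M)$) and then intersects over $c$.
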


\begin{proof}  By Lemma \ref{bigsubgroups} (a)(ii), all nilpotent images of $M$ have finite Pr\"ufer rank.

For each $c\in\NN$ let $R_c/\gamma_{c+1}(M)$ be the torsion subgroup of $M/\gamma_{c+1}(M)$
and $S_c/\gamma_{c+1}(K)$ the torsion subgroup of $K/\gamma_{c+1}(K)$.  Then $R_c^t/\gamma_{c+1}(M^t)$ is the torsion subgroup of $M^t/\gamma_{c+1}(M^t)$ and $\bigcup_{n\geq0} R_c^{t^{-n}}=S_c.$  Moreover
 $M^t/R_c^t\cong M/R_c$;
in particular, these two torsion-free groups of finite Pr\"ufer rank have the same torsion-free rank.
By hypothesis (ii) and the remark, the index $|M/M':M^tM'/M'|$ is finite; therefore from Lemma  \ref{bigsubgroups}(b) we deduce that the subgroup $M^tR_c/R_c$ has finite index in $M/R_c$; therefore these groups too have the same torsion-free rank.  Thus $M^t/R_c^t$ has the same torsion-free rank as its homomorphic image $M^t/(R_c\cap M^t)$, and is torsion-free. It follows that $R_c\cap M^t= R_c^t$ and $R_c^{t^{-1}}\cap M=R_c$.
Conjugating repeatedly by powers of $t^{-1}$
we now find that $R_c^{t^{-n}}\cap M=R_c$ for all $n>0$.
Therefore
$$S_c\cap M=\bigg( \bigcup_n R_c^{t^{-n}}\bigg) \cap M
=\bigcup_n \big(R_c^{t^{-n}}\cap M\big)=R_c.$$
This holds for all $c$, and hence
$$\bigg(\bigcap_c S_c\bigg) \cap M=\bigcap_c R_c=1,$$
since $M$ is residually $\Ntf$.
Conjugating by $t^{-r}$ we now find that $(\bigcap S_c)\cap M^{t^{-r}}=\{1\}$ for each $r\geq0$.  Since $K=\bigcup M^{t^{-r}}$ it follows that $\bigcap_{c>0}S_c=\{1\}$, and that
$K$ is residually $\Ntf$, as required. \end{proof}

We now prove Theorem \ref{B}. Let $G$ have presentation $\langle x,t\mid w\rangle$, where $w$ is a monic word.
We may replace $w$ by $w^{t^{-e_w}}$ where $e_w$ is defined as in the Introduction.
Let $A_w(X)$ have degree $d$ and suppose that all of its roots are real and positive.

Write $x_i=x^{t^i}$ for each $i\in\ZZ$.
Thus $x_d$ is a product of the elements $x_i^{\pm 1}$ with $0\leq i<d$.
It will
suffice to prove that the group $K:=\langle x_i\mid i\in\ZZ\rangle$ is residually $\Ntf$ since then the result follows from Theorem \ref{resnilptoorder}.

By the proof of the Freiheitssatz (see \cite[p.\ 199]{LS}), the subgroup $F$ of $G$ generated by $x_0,\dots,x_{d-1}$ is free on these generators: in particular, it is residually $\Ntf$.
Since $F$ is finitely generated, its nilpotent images certainly have finite Pr\"ufer rank.  Therefore the hypotheses of Proposition \ref{laboursaving} hold, and we conclude that
$K$ is residually $\Ntf$, as required.   \hfill $\Box$

\section{Extraction of roots in groups}

To prove Theorem \ref{nm}, we need to develop G. Baumslag's theory of roots in groups, as described in \cite{B1} and \cite{B2}.

In \cite[Chapter VI]{B1} a class $\mathcal{D}_{\omega}$ of groups is introduced for each set $\omega$ of primes.   We write $\mathcal{D}$ for the class corresponding to the set of all primes.
Thus a group $G$ is in $\mathcal{D}$ if it satisfies the following four conditions:
\begin{itemize}
\item[(1)] if $g^r=h^r$, where $g$, $h\in G$ and $r\in\ZZ\setminus\{0\}$, then $g=h$;
\item[(2)] if $g\in G$ has no $p$th root in $G$ for some prime $p$, then the centralizer $\C_G(g)$ is isomorphic to a
subgroup of the additive group of $\QQ$;
\item[(3)] if $g\in G$ has no $p$th root in $G$ for some prime $p$, then  $\C_G(g^r)=\C_G(g)$ for all
integers $r\ne 0$;
\item[(4)] if $g\in G$ has no $p$th root in $G$ for some prime $p$, and $f^{-1}g^rf=g^s$, where $f\in
G$ and $r,s\in\ZZ\setminus\{0\}$, then $r=s$.
\end{itemize}
By (1), every group in $\mathcal{D}$ is torsion-free.
\begin{proposition} \label{fpinD}Suppose that $G\in \mathcal{D}$ and $u\in G\setminus\{1\}$ is such that $\C_G(u)=\langle u\rangle$. Let $m$ be a non-zero integer. Then the free product with amalgamation $H:=G*_{(u=x^m)}\langle x\rangle$ is also in $\mathcal{D}$.
\end{proposition}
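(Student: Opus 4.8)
The plan is to realise $H=G*_{(u=x^m)}\langle x\rangle$ as a fundamental group of a graph of groups and to verify conditions (1)--(4) by controlling \emph{all} centralizers in $H$ through the action of $H$ on the associated Bass--Serre tree $T$ (vertices of two types, with vertex groups conjugates of $G$ and of $\langle x\rangle$, edge groups conjugates of $C:=\langle u\rangle$). First I would dispose of the trivial case $|m|=1$, where $\langle x\rangle=\langle u\rangle$ and $H=G\in\mathcal D$, and so assume $|m|\ge2$. The one structural input that drives everything is that $C$ is \emph{malnormal} in the factor $G$: if $a\in G\setminus C$ and some nontrivial $u^i=(u^j)^a$ with $i,j\ne0$, then condition (4) for $G$ applied to the rootless element $u$ forces $i=j$, whereupon condition (3) gives $a\in\C_G(u^i)=\C_G(u)=\langle u\rangle$, a contradiction. (Here $u$ has no $p$th root in $G$ for any prime $p$, since such a root would lie in $\C_G(u)=\langle u\rangle$ and $G$ is torsion-free.)

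Next I would classify $g\in H\setminus\{1\}$ by its action on $T$ and compute $\C_H(g)$ in each case. If $g$ is hyperbolic with axis $L$, then $\C_H(g)$ preserves $L$ and its translation direction, hence embeds in the infinite cyclic translation group of $L$ modulo the pointwise stabilizer $E$ of $L$; since $T$ is bipartite and every axis meets a $G$-vertex, malnormality makes the two axis-edge-stabilizers at such a vertex intersect trivially, so $E=1$, $\C_H(g)$ is infinite cyclic, and (as $g$ and all its powers share $L$ and its orientation) $\C_H(g^n)=\C_H(g)$ for every $n\ne0$. If $g$ fixes an edge (equivalently, $g$ is conjugate into $C$), the standard amalgam centralizer formula together with condition (3) for $G$ gives $\C_H(g)$ equal to a conjugate of $C*_C\langle x\rangle=\langle x\rangle$. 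If $g$ fixes a $G$-vertex but no edge (i.e.\ $g$ is conjugate into $G$ but not into $C$), then anything commuting with $g$ fixes the unique fixed vertex of $g$, so $\C_H(g)$ is a conjugate of $\C_G(g)\le G$; and if $g$ is conjugate into $\langle x\rangle$ but not into $C$, then $\C_H(g)$ is a conjugate of $\langle x\rangle$. I would also record $N_H(\langle x\rangle)=\langle x\rangle$ ($\langle x\rangle$ fixes a unique vertex of $T$), and the transfer principle that for $g$ conjugate into $G$ but not into $C$ any $p$th root of $g$ in $H$ lies in $\C_H(g)=\C_G(g)\le G$, so $g$ has a $p$th root in $H$ iff it has one in $G$.

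With this dictionary the axioms follow by cases. Torsion-freeness of $H$, plus the computation above, gives condition (1): if $g^r=h^r=w\ne1$ then (conjugating so $w$ is in its factor when elliptic) $g,h$ lie in $\C_H(w)$, which is either infinite cyclic or equal to $\C_G(w)\le G$, and uniqueness of roots is inherited from $\mathbb Z$ or from condition (1) for $G$. Condition (2) is immediate whenever $\C_H(g)\cong\mathbb Z$, and in the remaining $G$-vertex case it reduces, via the transfer principle, to condition (2) for $G$. Condition (3) was already obtained for hyperbolic $g$; for the elliptic cases one checks that the centralizer type is preserved under taking powers (malnormality and condition (3) for $G$ prevent a $G$-elliptic element from acquiring an edge-fixing power), and then invokes condition (3) for $G$. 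For condition (4): an edge- or $\langle x\rangle$-type $g$ forces the conjugator $f$ into $N_H(\langle x\rangle)=\langle x\rangle$, whence $r=s$; a $G$-vertex $g$ forces $f$ to fix that vertex, so $f\in G$ and condition (4) for $G$ applies; and for hyperbolic $g$ the translation length gives $|r|=|s|$, after which $r=-s$ is excluded, since it would yield $f^{-1}gf=g^{-1}$ (by uniqueness of roots in $H$) and then $(fg)^2=f^2$, forcing $g=1$.

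The main obstacle is the hyperbolic case: proving that the pointwise stabilizer of an axis is trivial, which is exactly where malnormality of $\langle u\rangle$ in $G$ — and hence the full force of conditions (3) and (4) for $G$ — is needed. Once this is in hand, infinite-cyclicity of hyperbolic centralizers and their invariance under powers fall out, and the rest is careful case bookkeeping. A secondary point requiring care is cleanly separating ``conjugate into $C$'' from ``conjugate into $G$ but not into $C$'', since an element of $G\setminus C$ may still be conjugate into $C$; the tree language handles this automatically by asking whether $g$ fixes an edge. I expect the elementary normal-form treatment of \cite{LS} could replace the tree throughout, but the Bass--Serre viewpoint makes the centralizer computations most transparent.
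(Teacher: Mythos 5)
Your proof is correct, but it takes a genuinely different route from the paper's. The paper never analyzes $H$ directly through its own tree: it embeds $H$ in Baumslag's group $K:=G*_{\langle u\rangle}P$, where $P\cong(\QQ,+)$ contains an $m$th root of $u$, invokes \cite[Theorem 29.1]{B1} to conclude $K\in\mathcal{D}$, and then pulls conditions (1)--(4) back from $K$ to $H$: it checks that reduced decompositions in $H$ stay reduced in $K$, that an element with no $p$th root in $H$ has none in $K$ (via the Conjugacy Theorem \cite[Theorem 4.6]{MKS}), and that the relevant centralizers of $K$ meet $H$ correctly (via Lemmas 28.1, 28.2 and 28.6 of \cite{B1}). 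You instead stay inside $H$ and control everything by the action on its Bass--Serre tree, with malnormality of $\langle u\rangle$ in $G$ as the driving lemma; note that your derivation of malnormality from conditions (3) and (4) for $G$ is essentially the same group-theoretic step the paper performs when it proves that $h^{-1}x^rh=x^s$ forces $r=s$, so the two arguments rest on the same facts about $G$, packaged differently. What the paper's route buys is economy: the hard work is outsourced to Baumslag's theorem, the rest is normal-form bookkeeping, and several intermediate facts from that proof (no $p$th root in $H$ implies none in $K$; the centralizer identifications) are then reused verbatim in the proof of Corollary \ref{fpinD0}. What your route buys is self-containedness and strength: you avoid the overgroup $K$ and \cite[Theorem 29.1]{B1} entirely, you obtain a complete classification of centralizers in $H$ (infinite cyclic for hyperbolic elements; conjugates of $\langle x\rangle$, or of $\C_G(g)$, for elliptic ones), conditions (3) and (4) come out unconditionally except in the $G$-vertex case, and property (5) defining $\mathcal{D}_0$ --- that is, Corollary \ref{fpinD0} --- would fall out of the same dictionary at no extra cost. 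The delicate points of your sketch all check out: triviality of the pointwise stabilizer of an axis via the two distinct edge groups at a $G$-vertex; the fact that a power of a $G$-elliptic element cannot acquire an edge fixed point (if $(a^{-1}ga)^r=u^j$ then $a^{-1}ga\in\C_G(u^j)=\langle u\rangle$ by condition (3) for $G$); and the exclusion of $r=-s$ in condition (4) by uniqueness of roots and $(fg)^2=f^2$. One caution if your proof were substituted for the paper's: Corollary \ref{fpinD0} cites facts ``from the proof of Proposition \ref{fpinD}'' concerning $K$, so its proof would need to be rewritten in terms of your centralizer dictionary --- which, as noted, it supports.
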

\proof First note that, if $p$ is a prime, then since $G$ is torsion-free and $C_G(u)=\langle u\rangle$, it follows that $u$ has no $p$th root in $G$.
We can assume that $m\geq1$.  If $m=1$ then $H=G$, so we can assume that $m>1$. Let $P$ be a group (written multiplicatively) isomorphic to the additive group of $\mathbb{Q}$, and take an embedding of $\langle u\rangle$ into $P$. By \cite[Theorem 29.1]{B1}, the group $K:=G*_{\langle u\rangle}P$ is in the class $\mathcal{D}$. Moreover, we can embed $H$ in $K$ by mapping $x$ to the $m$th root of $u$ in $P$. Since condition (1) is clearly preserved by subgroups, $H$ satisfies (1).
Before we proceed to establish (2)--(4), two remarks are needed.

Suppose that $h\in H\leq K$.
Then $h$ has a reduced decomposition in the amalgamated free product $K=G\ast_{\langle u\rangle}P$; say $h=k_1\ldots k_n$, where the factors $k_i$ come alternately from $G$ and $P$, and none of them is in the amalgamated subgroup $\langle x^m\rangle$, unless $n=1$. Similarly, we can write a reduced decomposition of $h$ in $H$, say $h=h_1\ldots h_l$, with factors $h_i$ alternately from $G$ and $\langle x\rangle$. But this is a reduced decomposition of $h$ in $K$. It follows from the Reduced Form Theorem (\cite[Chapter 1, Theorem 26]{DC}) that $n=l$ and $k_i\in \langle x^m\rangle h_i\langle x^m\rangle$ for $1\leq i \leq n$. Hence $h=k_1\ldots k_n$ is also a reduced decomposition of $h$ in $H$.

Suppose that $h^{-1}x^rh=x^s$, where $h\in H$ and $r,s\in\ZZ$. We claim that $r=s$.
For write $h=h_1\ldots h_n$ in reduced form in $H$, and use induction on $n$.
First suppose that $n=1$.  Clearly $r=s$ if $h_1\in \langle x\rangle$. Otherwise, $h_1\in G\setminus\langle x\rangle$, and by the Reduced Form Theorem, $x^r\in \langle x^m\rangle$; so $x^s\in G$, and hence $x^s\in \langle x^m\rangle$. Thus $r=mr'$, $s=ms'$ for some $r'$, $s'\in\ZZ$, and $h_1^{-1}u^{r'}h_1=u^{s'}$. Since $G\in \mathcal{D}$ and $u$ has no $p$th root in $G$, we have $r'=s'$, and hence $r=s$.
Now suppose that $n>1$. Then $h_1^{-1}x^rh_1\in \langle x\rangle$. For suppose not; then $h_1\in G\setminus \langle x\rangle$, so depending on whether or not $m$ divides $r$, either $h_n^{-1}\ldots h_1^{-1}x^rh_1\ldots h_n$ is a reduced word of length $2n+1$ or $h_n^{-1}\ldots h_2^{-1}(h_1^{-1}x^rh_1)h_2\ldots h_n$ is a reduced word of length $2n-1>1$ representing $x^b$, an element of the free factor $\langle x\rangle$ of $H$. This contradicts the Reduced Form Theorem, establishing that
$h_1^{-1}x^rh_1\in \langle x\rangle$. By the case $n=1$, we now have $h_1^{-1}x^rh_1=x^r$,  and so $h^{-1}x^rh= h_n^{-1}\ldots h_2^{-1}x^rh_2\ldots h_n=x^s$, and by induction $r=s$.

By \cite[Lemma 28.1]{B1}, if $r\neq 0$ then $\C_K(x^r)=P$, and so $\C_H(x^r)=P\cap H=\langle x\rangle$.  Suppose that $h\in H$ and $h$ has no $p$th root in $H$. In verifying that $H$ satisfies (2)--(4), we can assume that $h$ is cyclically reduced. If $h$ is conjugate in $H$ to a power of $x$, then (2)--(4) follow so we can assume that this is not the case.  Suppose then that $h$ has a $p$th root in $K$; this $p$th root is conjugate in $K$ to a cyclically reduced element, say $k$, and so $h$ is conjugate in $K$ to $k^p$. Let $k=k_1\ldots k_n$ in reduced form. If $n>1$ then $(k_1\ldots k_n)^p$ is a cyclically reduced decomposition of $k^p$ in $K$ of length $np$. By the Conjugacy Theorem (\cite[Theorem 4.6]{MKS}), $h'=(k_1\ldots k_n)^p$, where $h'$ is obtained from $h$ by cyclic permutation and conjugating by an element of $\langle x^m\rangle$. In particular, $h'$ is conjugate to $h$ in $H$. As noted above, this is a decomposition of $h'$ in $H$, and hence $k\in H$; therefore both $h'$, $h$ are $p$th powers in $H$, a contradiction. If $n=1$, then by the Conjugacy Theorem, $h$ is conjugate in $G$ to $k_1^p$. If $k_1\in P$, then $k_1^p\in \langle x^m\rangle$, which is impossible, so $k_1\in G$, and hence $h$ is a $p$th power in $G$,
which is again a contradiction. Hence $h$ has no $p$th root in $K$.  Thus $H$ satisfies (3) and (4) since $K\in \mathcal{D}$.

It remains to verify that $\C_H(h)$ is isomorphic to a subgroup of $\QQ$. If $h$ has length at least 2 relative to $H$, then by \cite[Lemma 28.6]{B1}, $\C_K(h)$ is cyclic, hence so is $\C_H(h)$, and (2) follows since $H$ is torsion-free. Finally suppose that $h\in G$. Since $h$ has no $p$th root in $K$, it is not conjugate in $K$ to an element of the amalgamated subgroup $\langle x^m\rangle$; so by \cite[Lemma 28.2]{B1} we conclude that $\C_K(h)=\C_G(h)$ and therefore $\C_H(h)=\C_G(h)$.  Since $G\in \mathcal{D}$ and $h$ has no $p$th root in $G$, it follows that $\C_G(h)$ is isomorphic to a subgroup of $\QQ$.
\endproof

\medskip\noindent{\bf Definitions.}  \ We call an element $g$ of a group $G$ {\em indivisible} if $g$ is not a proper power of any element of $G$.  We write $\mathcal{D}_0$ for
the class  of all groups $G$ in $\mathcal{D}$ satisfying
\begin{enumerate}
\item[\rm (5)] if $g\in G$ is indivisible in $G$ then $\C_G(g)=\langle g\rangle$.
\end{enumerate}
\smallskip

Note that free groups belong to $\mathcal{D}_0$.  It is shown that they are in $\mathcal{D}$ in \cite[Corollary 35.7]{B1}, and property (5) is well known (see, for example, \cite[Chapter 1, Proposition 12]{DC}).

\begin{corollary} \label{fpinD0}Suppose that $G\in \mathcal{D}_0$ and $ u$ is an indivisible element of $G$. Let $m\in\ZZ\setminus\{0\}$. Then $H:=G*_{(u=x^m)}\langle x\rangle\in \mathcal{D}_0$.
\end{corollary}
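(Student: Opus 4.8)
The plan is to establish the two defining requirements of $\mathcal{D}_0$ in turn: that $H\in\mathcal{D}$, and that $H$ satisfies condition (5). The first is immediate from Proposition \ref{fpinD}. First I would observe that, because $u$ is indivisible in $G$ and $G\in\mathcal{D}_0$, condition (5) for $G$ yields $\C_G(u)=\langle u\rangle$; moreover $u\neq1$, since an indivisible element is non-trivial. The hypotheses of Proposition \ref{fpinD} are therefore satisfied, and that proposition gives $H=G\ast_{(u=x^m)}\langle x\rangle\in\mathcal{D}$ directly, for any non-zero $m$.

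For condition (5), let $g\in H$ be indivisible; the goal is to show $\C_H(g)=\langle g\rangle$. The key observation, which lets me avoid any case analysis on the syllable length of $g$ in the amalgamated free product, is that an indivisible element has no $p$th root for any prime $p$, since such a root $y$ would exhibit $g=y^p$ as a proper power. Consequently condition (2) in the definition of $\mathcal{D}$, applied to $H\in\mathcal{D}$, shows that $\C_H(g)$ is isomorphic to a subgroup of the additive group of $\QQ$; in particular it is torsion-free abelian of rank at most $1$, and it of course contains $g$.

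It then remains only to prove the elementary fact that a subgroup $A$ of $\QQ$ possessing an element $a$ that is indivisible in $A$ must coincide with $\langle a\rangle$; applied to $A=\C_H(g)$ and $a=g$ (which is indivisible in $\C_H(g)$ because it is indivisible in the larger group $H$) this finishes the proof. After composing with the automorphism of $\QQ$ that carries $a$ to $1$, indivisibility of $a$ becomes the statement that $1/k\notin A$ for every integer $k\geq2$. For an arbitrary element of $A$, written in lowest terms as $p/q$ with $q\geq1$, a B\'ezout relation $sp+tq=1$ then shows that $1/q\in A$, forcing $q=1$; hence $A\subseteq\ZZ$ and therefore $A=\langle a\rangle$.

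I expect the hard part to be purely conceptual: recognising that indivisibility already supplies the root-freeness hypothesis of condition (2), so the centralizer control built into membership of $\mathcal{D}$ can be invoked at once. The sole computation, the lemma on subgroups of $\QQ$, is routine. Should one prefer an argument internal to the amalgamated product, one could instead treat separately the cases where $g$ is cyclically reduced of length $\geq2$ (where \cite[Lemma 28.6]{B1}, via the embedding $H\leq K=G\ast_{\langle u\rangle}P$ of Proposition \ref{fpinD}, makes $\C_H(g)$ cyclic), where $g\in\langle x\rangle$ (forcing $g=x^{\pm1}$, with $\C_H(x)=\langle x\rangle$ as computed in the proof of Proposition \ref{fpinD}), and where $g\in G\setminus\langle u\rangle$ (not conjugate into $\langle u\rangle$, since with $|m|\geq2$ every non-trivial power of $u$ is a proper power, so \cite[Lemma 28.2]{B1} gives $\C_H(g)=\C_G(g)=\langle g\rangle$); but the direct route above is considerably shorter.
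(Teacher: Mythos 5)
Your argument is correct, but it takes a genuinely different route from the paper's. The paper proves condition (5) for $H$ by a case analysis on a cyclically reduced representative of the indivisible element: if it is conjugate to $x^{\pm1}$, the centralizer $\langle x\rangle$ has already been computed; if it has length at least $2$, Baumslag's Lemma 28.6, applied in the overgroup $K=G\ast_{\langle u\rangle}P$ together with the fact (imported from the proof of Proposition \ref{fpinD}) that the element remains indivisible in $K$, shows the centralizer is cyclic on the element itself; and if it lies in $G$, Baumslag's Lemma 28.2 reduces to $\C_G(h)=\langle h\rangle$, which is condition (5) for $G$. You bypass all of this: once Proposition \ref{fpinD} gives $H\in\mathcal{D}$, you note that an indivisible element has no $p$th root for any prime, so axiom (2) of $\mathcal{D}$ applies and forces $\C_H(g)$ to embed in $\QQ$; your B\'ezout lemma (which is correct: $1/q\in A$ forces $q=1$, hence $A=\ZZ=\langle 1\rangle$ after normalizing) then shows that a subgroup of $\QQ$ containing an element indivisible in it is cyclic on that element. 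Notably, your argument uses nothing about the amalgamated-product structure of $H$ beyond Proposition \ref{fpinD}, and it in fact proves the stronger statement that, with the axioms as formulated in this paper, every group in $\mathcal{D}$ satisfies (5), i.e.\ $\mathcal{D}_0=\mathcal{D}$, so the corollary becomes an immediate consequence of the proposition. What the paper's longer route buys is robustness: it verifies (5) directly from the structure of $H$, so it does not depend on the precise phrasing of axiom (2) (your proof leans on the fact that indivisibility supplies its hypothesis); relative to the definitions as printed, however, your proof is complete and considerably shorter. One small caveat on your closing ``internal'' sketch: in the case $g\in G$ you would need indivisibility of $g$ in $K$ (not merely in $H$) before invoking Lemma 28.2 of \cite{B1}, which is exactly the point the paper extracts from the proof of Proposition \ref{fpinD}; but since that sketch is an aside and your main argument stands on its own, this does not affect the proposal.
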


\proof We can assume that $m>1$.  In view of Proposition \ref{fpinD}, we only need to show that $H$ satisfies (5).    Let $K$ be defined as in the proof of Proposition \ref{fpinD}.   Fix an indivisible element $h$, which we can assume to be cyclically reduced.  If $h$ is conjugate in $H$ to $x^{\pm 1}$
then, as noted previously, $\C_H(x^{\pm 1})=\langle x\rangle$, hence $\C_H(h)=\langle h\rangle$. Otherwise, either $h\in G$ or $h$ has length at least $2$ relative to the free product decomposition of $H$. From the proof of Proposition \ref{fpinD}, $h$ is not a $p$th power in $K$ for any prime $p$; that is, $h$ is indivisible in $K$.

If $h$ has length at least $2$ relative to $H$, then by Lemma 28.6 (and its proof) in  \cite{B1}, $\C_K(h)$ is cyclic, with a generator $h^*$ such that $h=(h^*)^r$ for some $r>0$. Since $h$ is indivisible in $K$, we must have $r=1$ and $\C_H(h)=\langle h\rangle$ as required. Finally suppose that $h\in G$. Since $h$ is indivisible in $K$, it is not conjugate in $K$ to an element of the amalgamated subgroup $\langle x^m\rangle$; so $\C_K(h)=\C_G(h)$ by \cite[Lemma 28.2]{B1}, and hence $\C_H(h)=\C_G(h)$.  Since $G\in \mathcal{D}_0$ and $h$ is indivisible in $G$, we have $\C_G(h)=\langle h\rangle$.
\endproof

The next result is a mild generalization of  \cite[Corollary 2]{B2} and the method of proof is the same.

\begin{proposition} \label{fpresp}Suppose that $G$ is finitely generated and residually a finite $p$-group, where $p$ is a prime.  Let $u$ be an element of $G$ of infinite order and assume that $\C_G(u)=\langle u\rangle$. Then for every $n\in\ZZ_{\geq 0}$, the group $H:=G*_{(u=x^{p^n})}\langle x\rangle$ is residually a finite $p$-group.
\end{proposition}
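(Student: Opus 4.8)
The plan is to separate points of $H$ by mapping it onto amalgams of finite $p$-groups with finite cyclic $p$-groups, each of which is itself residually a finite $p$-group by Higman's theorem on amalgams of $p$-groups. First, since $G$ is finitely generated and residually a finite $p$-group and $u$ has infinite order, I would fix a descending chain $G=G_0\geq G_1\geq\cdots$ of normal subgroups of $p$-power index in $G$ with $\bigcap_i G_i=\{1\}$. Because each index $|G\colon G_i|$ is a power of $p$, the subgroup $\langle u\rangle\cap G_i$ is automatically of the form $\langle u^{p^{s_i}}\rangle$, so the image of $u$ in the finite $p$-group $Q_i:=G/G_i$ has order $p^{s_i}$; refining the chain if necessary, I would arrange that $s_i\to\infty$. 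The one substantive requirement on the chain is that $\langle u\rangle$ be closed in the resulting pro-$p$ topology, that is, $\bigcap_i\langle u\rangle G_i=\langle u\rangle$; this is one of the points where the hypothesis $C_G(u)=\langle u\rangle$ is used, in order to guarantee that each element of $G\setminus\langle u\rangle$ can be separated from $\langle u\rangle$ in a finite $p$-quotient.

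Next, for each $i$ I would set $\bar u=uG_i$, of order $p^{s_i}$, and form the amalgamated free product
$$P_i:=Q_i\ast_{\langle\bar u\rangle=\langle\xi^{p^n}\rangle}\langle\xi\rangle,$$
where $\langle\xi\rangle$ is cyclic of order $p^{n+s_i}$ and $\langle\bar u\rangle$ is identified with the index-$p^n$ subgroup $\langle\xi^{p^n}\rangle$ (note $\xi^{p^n}$ has order exactly $p^{s_i}$, matching $\bar u$). Sending $G$ onto $Q_i$ and $x$ to $\xi$ respects the single defining relation, since $x^{p^n}=u$ maps to $\xi^{p^n}=\bar u$; this defines a homomorphism $\theta_i\colon H\to P_i$. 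By Higman's theorem each $P_i$, being an amalgam of two finite $p$-groups over a common subgroup, is residually a finite $p$-group. Consequently it suffices to prove that $\bigcap_i\ker\theta_i=\{1\}$: any $h\neq 1$ in $H$ surviving under some $\theta_i$ then also survives under a finite $p$-quotient of $P_i$, and the composite is a finite $p$-quotient of $H$ not killing $h$.

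Finally I would verify $\bigcap_i\ker\theta_i=\{1\}$ by a normal-form analysis in $H=G\ast_{\langle u\rangle}\langle x\rangle$. For $h\in G\setminus\{1\}$ one uses $\bigcap_i G_i=\{1\}$, and for a nontrivial power of $x$ one uses that $\xi$ has order $p^{n+s_i}\to\infty$. The essential case is a reduced word of syllable length at least two: here one must check that for large $i$ its image remains reduced in $P_i$, which requires that each $G$-syllable lying outside $\langle u\rangle$ maps outside $\langle\bar u\rangle=\langle u\rangle G_i/G_i$ (closedness of $\langle u\rangle$), and that each $x$-syllable outside $\langle u\rangle$ maps outside $\langle\xi^{p^n}\rangle$ (guaranteed once $s_i\to\infty$, so that the filtration of $\langle x\rangle$ by the groups $\langle x^{p^{n+s_i}}\rangle$ meets $\langle u\rangle$ in exactly $\langle u\rangle\cap G_i$).

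Checking that no collapse occurs in the images — equivalently, that the two filtrations, of $G$ by the $G_i$ and of $\langle x\rangle$ by the $\langle x^{p^{n+s_i}}\rangle$, induce one and the same, arbitrarily fine, filtration on the amalgamated subgroup $\langle u\rangle$ — is the step I expect to be the main obstacle, and it is precisely where the infinite order of $u$, the residual-$p$-ness of $G$, and the self-centralizing condition $C_G(u)=\langle u\rangle$ all enter together.
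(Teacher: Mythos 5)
Your plan breaks down at the step where you declare that each $P_i = Q_i \ast_{\langle\bar u\rangle=\langle\xi^{p^n}\rangle}\langle\xi\rangle$ is residually a finite $p$-group ``by Higman's theorem, being an amalgam of two finite $p$-groups over a common subgroup''. Higman's theorem (\emph{Amalgams of $p$-groups}, J.\ Algebra \textbf{1} (1964)) says essentially the opposite of what you need: an amalgamated free product of two finite $p$-groups is \emph{not} in general residually a finite $p$-group. The theorem is an if-and-only-if criterion: residual $p$-ness holds exactly when the amalgamated subgroup carries a filtration compatible with suitable $p$-central filtrations of \emph{both} factors, and Higman's paper exhibits amalgams of finite $p$-groups for which no such filtration exists. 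In your situation compatibility on the cyclic side is automatic (take the chain $\langle\xi^{p^j}\rangle$), but on the side of $Q_i=G/G_i$ you would have to produce a central filtration of $Q_i$ meeting $\langle\bar u\rangle$ exactly in the chain $\langle\bar u^{p^j}\rangle$, and nothing about an arbitrary chain $\{G_i\}$ with $\bigcap_i G_i=1$ and $s_i\to\infty$ provides this. Producing finite $p$-quotients of $G$ with that compatibility property is precisely the hard content of the proposition; it is what the argument of Baumslag \cite[Lemma 1]{B2}, which the paper invokes, actually supplies. So the proposal assumes away the crux.

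There is a second, related gap: you assert that $\C_G(u)=\langle u\rangle$ guarantees $\bigcap_i\langle u\rangle G_i=\langle u\rangle$, i.e.\ that $\langle u\rangle$ is closed in the pro-$p$ topology of $G$. This is stated, not proved, and it is not a formal consequence of the hypotheses; it too depends on a careful choice of the $G_i$, and you yourself flag the filtration-compatibility issue as ``the main obstacle'' without resolving it. For contrast, the paper's proof avoids finite amalgams entirely: it maps $H$ onto $\langle x\rangle/\langle x^{p^n}\rangle$ with $G\mapsto 1$, identifies the kernel $L$ as the free product of the $p^n$ conjugates $x^{-r}Gx^{r}$ $(0\leq r\leq p^n-1)$ amalgamating their common subgroup $\langle u\rangle$ (here $u=x^{p^n}$ is centralized by $x$), applies the argument of \cite[Lemma 1]{B2} to conclude that $L$ is residually a finite $p$-group, and finishes by noting that $L$ is finitely generated and normal of index $p^n$ in $H$. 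Your reduction to amalgams of finite quotients could in principle be repaired, but only by importing the same compatible-filtration construction, at which point it is no shorter than the route through Baumslag's lemma.
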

\proof  Let $L$ be the kernel of the homomorphism
$H\to \langle  x\rangle /\langle  x^{p^n}\rangle$ sending $x$ to $x\langle  x^{p^n}\rangle$ and $G$ to $1$.
Each element of $H$ can be written in the form $x^ky$, where $y$ is a product of elements of the groups $x^{-r}Gx^r$ for $r\in \ZZ$, and $0\leq k\leq p^n-1$. Hence $L$ is generated by $\bigcup_{r\in \ZZ}x^{-r}Gx^r=\bigcup_{r=0}^{p^n-1}x^{-r}Gx^r$. It follows from the Reduced Form Theorem for free products with amalgamation that $L$ is the free product of the groups $x^{-r}Gx^r$ for $0\leq r\leq p^n-1$, amalgamating their common subgroup $\langle u\rangle$. By the argument for Lemma 1 in \cite{B2}, $L$ is residually a finite $p$-group; since $L$ is finitely generated and normal of index $p^n$ in $H$, the group $H$ is residually a finite $p$-group.
\endproof

Recall (from \cite{B2}) that two groups $A$, $B$ are said to have the same lower central sequence if there are isomorphisms $\psi_n\colon A/\gamma_n(A)\to B/\gamma_n(B)$, such that $\psi_n$ induces $\psi_{n-1}$, for all $n\geq 2$; and a group $G$ is termed \emph{parafree of rank} $r$ if $G$ is residually nilpotent and $G$ has the same lower central sequence as a free group of rank $r$.

\begin{lem} \label{fppfr} Let $G$ be a group, $u$ be an element of infinite order in $G$ and $H:=G*_{(u=x^m)}\langle x\rangle$, where $m\ne 0$. Suppose that $G$ has the same lower central sequence as a free group of rank $r$,  for some $r\in \ZZ_{>0}$, and $H/\gamma_2(H)$ is $r$-generated. Then $H$ has the same lower central sequence as a free group of rank $r$.
\end{lem}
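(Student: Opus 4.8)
The plan is to realise the desired isomorphisms by a single homomorphism from a free group and to control it by a Hirsch-length computation. Write $F$ for the free group of rank $r$. Since $H/\gamma_2(H)=H^{\rm ab}$ is by hypothesis generated by $r$ elements, I would lift a generating set of $H^{\rm ab}$ to $H$ and choose a homomorphism $\phi\colon F\to H$ whose composite with $H\to H^{\rm ab}$ is surjective. A standard nilpotency argument (a nilpotent group is generated by any subset that generates it modulo the second term of its lower central series) then shows that the induced maps $\phi_n\colon F/\gamma_n(F)\to H/\gamma_n(H)$ are surjective for every $n$. Because all the $\phi_n$ are induced by the one map $\phi$, each $\phi_n$ induces $\phi_{n-1}$; hence it suffices to prove that every $\phi_n$ is injective, for then the $\phi_n$ are the required compatible isomorphisms witnessing that $H$ has the same lower central sequence as $F$. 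Surjectivity of $\phi_n$ already gives $h(H/\gamma_n(H))\le h(F/\gamma_n(F))=:h_n$, where $h$ denotes Hirsch length (torsion-free rank); so the problem reduces to the reverse inequality $h(H/\gamma_n(H))\ge h_n$.

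For the lower bound I would prove the key fact that $\gamma_n(H)\cap G=\gamma_n(G)$, that is, that the inclusion induces an embedding $G/\gamma_n(G)\hookrightarrow H/\gamma_n(H)$. Since $G$ has the same lower central sequence as $F$, the group $T:=G/\gamma_n(G)$ is finitely generated torsion-free nilpotent, of class at most $n-1$ and with $h(T)=h_n$; let $T^{\QQ}\supseteq T$ be its Mal'cev completion, a radicable torsion-free nilpotent group of the same class, in which every element has a unique $m$th root. Let $\bar u\in T\subseteq T^{\QQ}$ be the image of $u$ and let $v\in T^{\QQ}$ be its unique $m$th root. By the universal property of the amalgam $H=G\ast_{(u=x^m)}\langle x\rangle$, the natural map $G\to T\hookrightarrow T^{\QQ}$ together with $x\mapsto v$ defines a homomorphism $\pi\colon H\to T^{\QQ}$, since both prescriptions send the amalgamated generator to $\bar u=v^m$. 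As $T^{\QQ}$ has class at most $n-1$ we have $\pi(\gamma_n(H))=1$, while $\pi|_G$ is the composite $G\to T\hookrightarrow T^{\QQ}$, whose kernel is exactly $\gamma_n(G)$. Hence any $g\in\gamma_n(H)\cap G$ satisfies $\pi(g)=1$ and so lies in $\gamma_n(G)$, giving $\gamma_n(H)\cap G\subseteq\gamma_n(G)$; the reverse inclusion is trivial. The embedding $G/\gamma_n(G)\hookrightarrow H/\gamma_n(H)$ then yields $h(H/\gamma_n(H))\ge h(T)=h_n$.

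Combining the two bounds gives $h(H/\gamma_n(H))=h_n=h(F/\gamma_n(F))$. Now $\phi_n$ is a surjection from the finitely generated torsion-free nilpotent group $F/\gamma_n(F)$ onto the nilpotent group $H/\gamma_n(H)$ of the same Hirsch length; by additivity of Hirsch length its kernel has Hirsch length $0$, hence is finite, and being a finite subgroup of the torsion-free group $F/\gamma_n(F)$ it is trivial. Thus each $\phi_n$ is an isomorphism, and $H$ has the same lower central sequence as $F$. (The degenerate case $|m|=1$, in which $H=G$, is immediate.) I expect the main obstacle to be the key step $\gamma_n(H)\cap G=\gamma_n(G)$; the device that makes it work is passing to the Mal'cev completion of $G/\gamma_n(G)$, which supplies the $m$th root of $u$ needed to extend the quotient map $G\to G/\gamma_n(G)$ across the adjoined generator $x$. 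The remaining verifications—surjectivity of the $\phi_n$ and the Hirsch-length additivity—are routine.
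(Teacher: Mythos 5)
Your proposal is correct and is in substance the same argument as the paper's: the paper's proof is a citation to \cite[Propositions 1 and 2]{B2}, and the content of Baumslag's argument invoked there is exactly what you wrote out --- a homomorphism from a free group of rank $r$ that is surjective on every nilpotent quotient of $H$ (this is the role of Lemma \ref{bigsubgroups}(a)), the key embedding $G/\gamma_n(G)\hookrightarrow H/\gamma_n(H)$ obtained by extending $G\to G/\gamma_n(G)$ across the adjoined root via the Mal'cev (divisible) completion, and a rank count turning surjectivity into injectivity. So your write-up is a correct, self-contained unfolding of the citation rather than a genuinely different route.
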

\proof Arguing as in the first paragraph of the proof of \cite[Proposition 2]{B2} (top of p.\ 312), and using Lemma \ref{bigsubgroups} (a) we obtain the result from \cite[Proposition 1]{B2}.
\endproof
We shall also need the following simple observation on free products with amalgamation.

\begin{lem} \label{vind0}Let $G=A*_HB$ be a free product with amalgamation and suppose that $b$ is an indivisible element of $B$ which is not conjugate in $B$ to an element of $H$. Then $b$ is an indivisible element of $G$.
\end{lem}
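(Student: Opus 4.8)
The plan is to analyse the action of $G=A*_HB$ on its Bass--Serre tree $T$, letting the hypothesis pin down the fixed-point set of $b$. Recall the standard description: the vertices of $T$ are the cosets in $G/A\sqcup G/B$, the edges are the cosets in $G/H$, the stabiliser of a vertex $gB$ is $gBg^{-1}$ and that of an edge $gH$ is $gHg^{-1}$, and $G$ acts without inversions. Write $v_B$ for the vertex fixed by $B$. Everything will come down to the geometry of how $b$ acts on $T$.

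The key step, and the only place where the hypothesis enters, is to show that $b$ fixes \emph{only} the vertex $v_B$. Certainly $b$ fixes $v_B$. The edges of $T$ incident to $v_B$ are exactly those of the form $\beta H$ with $\beta\in B$, and such an edge is fixed by $b$ precisely when $b\in\beta H\beta^{-1}$, i.e.\ when $\beta^{-1}b\beta\in H$. Since by hypothesis $b$ is not conjugate in $B$ to an element of $H$, it fixes no edge incident to $v_B$. As the fixed-point set $T^b$ is a subtree and any subtree properly containing $v_B$ would contain an edge incident to $v_B$, we conclude $T^b=\{v_B\}$; in particular $b$ is elliptic.

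Now suppose, for a contradiction, that $b=g^k$ with $g\in G$ and $k\geq 2$. If $g$ were hyperbolic then so would be $g^k=b$, since translation lengths satisfy $\ell(g^k)=k\,\ell(g)>0$; but $b$ is elliptic, so $g$ must be elliptic and fix some vertex $w$. Then $g^k=b$ fixes $w$, forcing $w\in T^b=\{v_B\}$, so $w=v_B$ and hence $g\in\mathrm{Stab}(v_B)=B$. Thus $b=g^k$ with $g\in B$ and $k\geq 2$, contradicting the indivisibility of $b$ in $B$. This contradiction establishes that $b$ is indivisible in $G$.

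I do not expect a genuine obstacle here beyond the fixed-point computation of the second paragraph, which is why the tree formulation is attractive: it isolates the single use of the hypothesis cleanly. The same conclusion can be reached purely combinatorially from the Conjugacy Theorem \cite[Theorem 4.6]{MKS}: a cyclic-length computation forces $g$ to be conjugate into a factor, the hypothesis then excludes the factor $A$, and for the factor $B$ it upgrades conjugacy in $G$ to conjugacy in $B$, again producing a proper $k$-th root of $b$ inside $B$; but that route requires the more delicate length-one part of the Conjugacy Theorem, whereas the tree argument does not.
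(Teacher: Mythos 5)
Your proof is correct, and it takes a genuinely different route from the paper's. The paper argues combinatorially: assuming $b=v^n$ with $n>1$, it conjugates $v$ to a cyclically reduced element $w$, and then invokes the Conjugacy Theorem for amalgamated free products \cite[Theorem 4.6]{MKS} to conclude that, because $b$ is not conjugate in $B$ to an element of $H$, the cyclically reduced element $w^n$ must lie in $B\setminus H$ and be conjugate to $b$ \emph{within} $B$; hence $w\in B$ and $b$ is a proper power in $B$, a contradiction. This is precisely the ``length-one part'' of the Conjugacy Theorem that you flag in your closing remark, so your sketch of the alternative combinatorial route is essentially the paper's proof. Your Bass--Serre argument buys conceptual clarity: the hypothesis is used exactly once, to show that the fixed-point subtree of $b$ is the single vertex $v_B$, after which the elliptic/hyperbolic dichotomy and $\ell(g^k)=k\,\ell(g)$ force any root $g$ of $b$ into $\mathrm{Stab}(v_B)=B$; this avoids the delicate case analysis for elements of short length and generalizes immediately to arbitrary graphs of groups. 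What the paper's route buys is economy of means: Section 6 of the paper already works systematically with reduced forms, the Reduced Form Theorem and the Conjugacy Theorem (in Proposition \ref{fpinD} and Corollary \ref{fpinD0}), so its proof of Lemma \ref{vind0} stays inside a toolkit already set up, whereas yours imports tree machinery used nowhere else in the paper. Both proofs are complete; the only cosmetic caveat in yours is that ``the vertex fixed by $B$'' should strictly be ``the vertex corresponding to the coset $1\cdot B$'', whose stabiliser is $B$.
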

\proof Suppose that $b=v^n$, where $v\in G$ and $n>1$. Then $v=z^{-1}wz$ for some $z\in G$ and cyclically reduced $w$. By the Conjugacy Theorem for free products with amalgamation (see \cite [Theorem 4.6]{MKS}),
$b$ is conjugate to $w^n$ in $B$, and $w^n\in B\setminus H$. It follows that $w\in B$.  Hence $b$ is a proper power in $B$, which is a contradiction.
\endproof

\section{Proof of Theorem \ref{nm}}

Suppose that $G=\langle x,t\mid w\rangle$, for some principal word $w=w(x,t)$.
Conjugating $w$ by a suitable power of $t$ and making a cyclic permutation if necessary, we can assume that $w(x,t)=u(x,x^t,\ldots,x^{t^{d-1}})x^{-mt^d}$ for some $d$ and $m\ne 0$, where $u(x_0,\ldots,x_{d-1})$ is a word representing an element of the free group on $x_0,\ldots,x_{d-1}$, such that there is an occurrence of $x_0$ or $x_0^{-1}$  in $u(x_0,\ldots,x_{d-1})$. Then $G$ is the HNN extension
\[ G=\langle t,x_0,\ldots,x_d\mid u(x_0,x_1,\ldots,x_{d-1})=x_d^m, x_i^t=x_{i+1}\ (0\leq i\leq d-1)\rangle \]
with base a one-relator group and free amalgamated subgroups (by the Freiheitssatz).
We shall prove that, under certain conditions, the normal subgroup $K:=\langle x^{t^n}\mid n\in\ZZ\rangle$ of $G$ generated by $x$ is residually $\Ntf$. We recall that $\Ntf$ is the class of torsion-free nilpotent groups.

The group $K$ is the kernel of the retraction $G\to\langle t\rangle$ and has a well-known structure as a tree product, the tree having vertex set $\ZZ$ with edges joining $i$ and $i+1$, for all $i\in\ZZ$. To describe this structure, take a countable set of symbols $\{ x_i\mid i\in\ZZ\}$ and for $i\in\ZZ$, let $F_i$ be the free group on $x_i,\dots,x_{i+d-1}$.  Let $u(x_0,\ldots,x_{d-1})$ be a reduced word representing an element of $F_0$, and assume that there is at least one occurrence of $x_0^{\pm 1}$ in $u(x_0,\dots,x_{d-1})$.

For $i\in \ZZ$, define $V_i$ by
$$V_i:=F_i\ast_{(u(x_i,\dots,x_{i+d-1})=x_{i+d}^m)} \;\langle x_{i+d}\rangle.$$
By the Freiheitssatz, $x_{i+1},\ldots ,x_{i+d}$ freely generate a subgroup of $V_i$.
Hence, for any $i,j\in \ZZ$ with $j\geq i$, we can define $V_{i,j}$ inductively by

$$V_{i,i}:=V_i \quad \hbox{and} \quad V_{i,j+1}:= V_{i,j}\; \ast_{\langle x_{j
+1},\dots,x_{j+d}\rangle}\; V_{j+1}.\eqno(\dag)$$
Note that
$$
V_{i,j+1}=V_{i,j}\;\ast_{(u(x_{j+1},\dots x_{j+d})=x_{j+d+1}^m)} \; \langle x_{j+d+1} \rangle.\eqno(\dagger\dag)
$$
Thus $V_{i,j}$ has a presentation with $j+d+1$ generators and $j+1$ relations.

The tree product structure of $K$ can be described by observing that there is an isomorphism $\bigcup\{V_{i,j}\mid i,j\in \ZZ,\;i\leq j\}\to K$, sending $x_n$ to $x^{t^n}$ for $n\in\ZZ$. We begin by showing that, under certain conditions, $V_{i,j}$ is a parafree group.

Let $a_k$ be the weight of $x_k$ in the word $u(x_0,\ldots,x_{d-1})x_d^{-m}$ for $0\leq k\leq d$. Let $R_j$ be the $(j+1)\times (d+j+1)$ integer matrix
\[ \begin{pmatrix}
a_0&a_1&a_2&\ldots&a_d&0&0&\ldots&0\\
0&a_0&a_1&\ldots&a_{d-1}&a_d&0&\ldots&0\\
0&0&a_0&\ldots&a_{d-2}&a_{d-1}&a_d&\ldots&0\\
\vdots&&&&&&&&\vdots\\
0&0&\ldots&\qquad a_0&&\ldots&&a_{d-1}&a_d
\end{pmatrix}.\]
Further, let $R_j(a)$ be the result of replacing the bottom right-hand entry of $R_j$ by $-a$. Note that $a_d=-m$. Recall the definition of Smith normal form \cite[Section 10.5, Theorem 4]{PC}. We are interested in the situation where $R_j$ and $R_j(a)$ have Smith normal form over $\ZZ$ with entries $1$ on the diagonal, that is, the identity matrix $I_{j+1}$ followed by $d$ columns of zeros. This is equivalent to the condition that  the greatest common divisor of the $(j+1)\times (j+1)$ minors is $1$. Suppose that $a$ divides $m$.  Each $(j+1)\times (j+1)$ submatrix of $R_j(a)$ including the last column has determinant dividing that of the corresponding submatrix in $R_j$. The other $(j+1)\times (j+1)$ submatrices are also submatrices of $R_j$. It follows that the greatest common divisor of the $(j+1)\times (j+1)$ minors of $R_j(a)$ divides the greatest common divisor of the $(j+1)\times (j+1)$ minors of $R_j$. Hence, if $R_j$ has Smith normal form with entries $1$ on the diagonal, then so does $R_j(a)$.

The following is the main result which will enable us to prove that $K$ is residually nilpotent. Assume that $m>0$.

\begin{proposition} Let $a_k$ be the weight of $x_k$ in $u(x_0,\ldots,x_{d-1})x_d^{-m}$ for $0\leq k\leq d$. Assume that
\begin{itemize}
\item[\rm (a)]
$u(x_0,\dots,x_{d-1})$ is indivisible in $F_0$,
\item[\rm (b)]
$u(x_1,\dots,x_{d})$ is indivisible in $V_0$ and  not conjugate in $V_0$ to an element of $F_0$, and
\item[\rm (c)]
for all $j\geq 0$, the matrix $R_j$ has Smith normal form with entries $1$ on the diagonal.
\end{itemize}
Then for all $j\geq i$, $V_{i,j}$ is parafree of rank $d$.
\end{proposition}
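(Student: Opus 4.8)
The plan is to prove, by induction on $j\geq 0$ after reducing via translation to $i=0$, the combined statement that each $V_{0,j}$ lies in $\mathcal{D}_0$, has abelianization $\cong\ZZ^d$, has the same lower central sequence as a free group of rank $d$, and is residually nilpotent; the last two properties together say $V_{0,j}$ is parafree of rank $d$. Since the shift $x_k\mapsto x_{k-i}$ carries $V_{i,j}$ isomorphically onto $V_{0,j-i}$ and respects all the hypotheses (those in (a) and (b) simply shift upward, while (c) depends only on the weights of $u$ and on $m$), it suffices to treat $i=0$, and at each inductive step I may invoke (a) and (b) with their indices raised.

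For the base case $V_{0,0}=V_0=F_0\ast_{(u(x_0,\dots,x_{d-1})=x_d^m)}\langle x_d\rangle$, hypothesis (a) says $u(x_0,\dots,x_{d-1})$ is indivisible in the free group $F_0\in\mathcal{D}_0$, so Corollary \ref{fpinD0} gives $V_0\in\mathcal{D}_0$. The abelianization of $V_0$ has relation matrix $R_0=(a_0,\dots,a_d)$, whose Smith normal form is trivial by (c), so $V_0^{\ab}\cong\ZZ^d$ is $d$-generated; since $u$ has infinite order in $F_0$, Lemma \ref{fppfr} then shows $V_0$ has the same lower central sequence as a free group of rank $d$.

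For the inductive step I pass from $V_{0,j}$ to $V_{0,j+1}=V_{0,j}\ast_{(v=x_{j+d+1}^m)}\langle x_{j+d+1}\rangle$, where $v=u(x_{j+1},\dots,x_{j+d})$. The crucial point is that $v$ is indivisible in $V_{0,j}$: for $j=0$ this is exactly hypothesis (b), while for $j\geq 1$ I use the decomposition $(\dag)$, namely $V_{0,j}=V_{0,j-1}\ast_{F_j}V_j$ with $F_j=\langle x_j,\dots,x_{j+d-1}\rangle$, note that $v\in V_j$, and apply hypothesis (b) shifted to index $j$ (which says $v$ is indivisible in $V_j$ and not conjugate in $V_j$ into $F_j$) together with Lemma \ref{vind0}. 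Granting indivisibility, property (5) for $V_{0,j}\in\mathcal{D}_0$ gives $\C_{V_{0,j}}(v)=\langle v\rangle$; Corollary \ref{fpinD0} then yields $V_{0,j+1}\in\mathcal{D}_0$; hypothesis (c), via the matrix $R_{j+1}$, gives $V_{0,j+1}^{\ab}\cong\ZZ^d$; and Lemma \ref{fppfr}, with $v$ of infinite order in the torsion-free group $V_{0,j}$, propagates the ``same lower central sequence'' property to $V_{0,j+1}$.

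The one ingredient not yet supplied is residual nilpotence, and this is the step I expect to be the main obstacle, being precisely where the non-monic case is harder than the monic one. The natural mechanism is Proposition \ref{fpresp}: if $m=p^n$ is a prime power then, since each $V_{0,j}$ is finitely generated with $\C_{V_{0,j}}(v)=\langle v\rangle$ and $v$ of infinite order, an induction parallel to the above shows that every $V_{0,j}$ is residually a finite $p$-group, hence residually nilpotent, completing the proof that $V_{0,j}$ is parafree. For a general $m>0$ this argument breaks down, because the amalgamating exponent is no longer a prime power and ``same lower central sequence as a free group'' does not by itself force $\bigcap_n\gamma_n(V_{0,j})=1$; here I would fix a prime $p$ (for instance one not dividing $m$) and establish a residual-$p$ statement adapted to the amalgamation $v=x^m$, or else reduce the residual nilpotence of $V_{0,j}$ to the prime-power situation covered by Proposition \ref{fpresp}. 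This reduction is the technical heart of the argument, and once it is in place the induction closes and $V_{i,j}$ is parafree of rank $d$ for all $j\geq i$.
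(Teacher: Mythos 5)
Your induction is set up exactly as in the paper as far as it goes: the reduction to $i=0$, the use of (a), of (b) shifted via $V_0\cong V_j$, Lemma \ref{vind0} applied to $V_{0,j}=V_{0,j-1}\ast_{F_j}V_j$, Corollary \ref{fpinD0} for membership in $\mathcal{D}_0$, condition (c) for the abelianization, and Lemma \ref{fppfr} for the lower central sequence are all the paper's ingredients. But the point you defer --- residual nilpotence for general $m$ --- is a genuine gap, not a routine verification, and it is precisely what the remaining hypotheses of the proposition (the prime decomposition of $m$ and the modified matrices $R_j(a)$) are there to serve. Note also that your fallback of ``fixing a prime $p$ not dividing $m$'' is a dead end: Proposition \ref{fpresp} produces residual $p$-finiteness only when the amalgamating exponent is a power of that \emph{same} prime $p$, so a prime coprime to $m$ gives you nothing.

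The paper's resolution, absent from your sketch, is to interpolate extra groups between $V_{0,j}$ and $V_{0,j+1}$ (and already between $F_0$ and $V_{0,0}$, so your base case has the same gap). Writing $m=q_1\cdots q_l$ with $q_i=p_i^{n_i}$ prime powers, one adjoins the $m$th root of $v=u(x_{j+1},\dots,x_{j+d})$ in $l$ stages: $W_0:=V_{0,j}$, $W_1:=W_0\ast_{(v=y_1^{q_1})}\langle y_1\rangle$, $W_i:=W_{i-1}\ast_{(y_{i-1}=y_i^{q_i})}\langle y_i\rangle$, with $W_l\cong V_{0,j+1}$. Each stage is an amalgamation with prime-power exponent $q_i$, and the base $W_{i-1}$, being finitely generated and residually (torsion-free nilpotent) by the inner induction, is residually a finite $p_i$-group (finitely generated torsion-free nilpotent groups are residually finite-$p$ for \emph{every} prime $p$); hence Proposition \ref{fpresp} applies at stage $i$ with the prime $p_i$. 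The prime changes from stage to stage, which is exactly why no single fixed prime suffices and why the one-step amalgamation $V_{0,j}\ast_{(v=x_{j+d+1}^m)}\langle x_{j+d+1}\rangle$ cannot be handled directly. This interpolation also explains parts of the setup your argument never touches: the intermediate groups $W_i$ are not of the form $V_{0,k}$, their abelianizations have relation matrices $R_{j+1}(q_1\cdots q_i)$ rather than $R_{j+1}$ --- whence the paper's preliminary observation that triviality of the Smith normal form of $R_j$ passes to $R_j(a)$ for $a\mid m$ --- and the indivisibility of the intermediate roots $y_{i-1}$ in $W_{i-1}$, needed to apply Corollary \ref{fpinD0} and Proposition \ref{fpresp}, comes from Lemma \ref{vind0} applied to the cyclic free factor. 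As written, your argument proves the proposition only when $m$ is a prime power.
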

\proof Since $V_{i,j}$ is isomorphic to $V_{0,j-i}$, it suffices to show that $V_{0,j}$ is parafree of rank $d$ for all $j\geq 0$. We show by induction on $j$ that $V_{0,j}$ is parafree of rank $d$ and $V_{0,j}\in\mathcal{D}_0$. Suppose that $m=p_1^{n_1}\ldots p_l^{n_l}$ is the decomposition of $m$ as a product of powers of distinct primes. Take new symbols $y_1,\ldots, y_l$, put $q_i=p_i^{n_i}$ and define
\begin{gather*} U_0:=F_0, \quad U_1:=U_0*_{(u(x_0,\ldots, x_{d-1})=y_1^{q_1})}\langle y_1\rangle, \\
U_2:=U_1*_{(y_1=y_2^{q_2})}\langle y_2\rangle,\quad \ldots,
\quad U_l:=U_{l-1}*_{(y_{l-1}=y_l^{q_l})}\langle y_l\rangle.
\end{gather*}
Thus $U_l\cong V_0=V_{0,0}$. Now $U_i/U'_i$ is generated by $x_0U'_i,\ldots,x_{d-1}U_i',y_iU_i'$ subject to the relation $xS_0(q_1\ldots q_i)=0$ (in additive notation), where $S_0(q_1\dots q_i)$ is the transpose of  $R_0(q_1\dots q_i)$ and $x=(x_0U_i',\dots, x_{d-1}U_i', y_iU_i')$; so by assumption (c) and the remarks preceding the proposition, $U_i/U_i'$ is free abelian of rank $d$. By induction on $i$ and Lemma \ref{fppfr}, we conclude that $U_i$ has the same lower central sequence as $F_0$.
We show by induction that $U_i$ is parafree of rank $d$ and $U_i\in\mathcal{D}_0$.
This is clear if $i=0$. Assume that $i>0$ and that $U_{i-1}$ is parafree of rank $d$ and belongs to $\mathcal{D}_0$. By assumption,
$u(x_0,\ldots ,x_{d-1})$ is indivisible in $U_0$, and $y_{i-1}$ is indivisible in $U_{i-1}$ for $i>1$ by Lemma \ref{vind0}. By Corollary \ref{fpinD0} we have $U_i\in\mathcal{D}_0$. By assumption, $U_{i-1}$ is residually $\mathcal{N}_{\rm tf}$, and hence is residually a finite $p_i$-group. By Proposition \ref{fpresp} the subgroup $U_i$ is residually a finite $p_i$-group, hence is parafree of rank $d$.
Thus $V_0\cong U_l$ is parafree of rank $d$ and is in $\mathcal{D}_0$.

Assume that $V_{0,j}$ is parafree of rank $d$ and belongs to $\mathcal{D}_0$. Define
\begin{gather*} W_0:=V_{0,j},\quad W_1:=W_0*_{(u(x_{j+1},\ldots, x_{j+d})=y_1^{q_1})}\langle y_1\rangle,\\
W_2:=W_1*_{(y_1=y_2^{q_2})}\langle y_2\rangle,\quad \ldots,
\quad
 W_l:=W_{l-1}*_{(y_{l-1}=y_l^{q_l})}\langle y_l\rangle.
\end{gather*}
By a similar inductive argument to that just given, $W_i$ is parafree of rank $d$ and belongs to $\mathcal{D}_0$ for $0\leq i\leq l$.  The main points are, first, that $W_i/W_i'$ is generated by
$x_0W_i',\ldots,x_{j+d}W_i',y_iW_i'$ subject to the relations
$xS_{j+1}(q_1\ldots q_i)=0$, where $x= (x_0W_i', \dots, x_{j+d}W_i', y_iW_i')$ and $S_{j+1}(q_1\ldots q_i)$ is the transpose of $R_{j+1}(q_1\ldots q_i)$. Secondly, if $j\geq 1$, then $V_0\cong V_j$ via the mapping $x_k\mapsto x_{k+j}\ (0\leq k\leq d)$, so by assumption (b) the element $u(x_{j+1},\ldots,x_{j+d})$ is indivisible in $V_j$ and is not conjugate in $V_j$ to an element of $F_j$. Using $(\dagger)$, we conclude that $W_0=V_{0,j-1}*_{F_j}V_j$,
and hence by Lemma \ref{vind0} that $u(x_{j+1},\ldots, x_{j+d})$ is indivisible in $W_{0}$.

Since $V_{0,j+1}\cong W_l$, this finishes the induction.
\endproof

We give a simple criterion for condition (c) to hold.

\begin{corollary} \label{0.8} Suppose that $m=p_1^{n_1}\ldots p_l^{n_l}$ is the decomposition of $m$ into prime powers and let $a_k$ be the weight of $x_k$ in the word $u(x_0,\ldots,x_{d-1})x_d^{-m}$ for $0\leq k\leq d$. Assume that
\begin{itemize}
\item[\rm (a)\phantom{$'$}]
$u(x_0,\dots,x_{d-1})$ is indivisible in $F_0$,
\item[\rm (b)\phantom{$'$}] $u(x_1,\dots,x_{d})$ is indivisible in $V_0$ and not conjugate in $V_0$ to an element of $F_0$, and
\item[\rm (c)$'$] for $1\leq i\leq l$, there exists $a_{r_i}$ which is not divisible by $p_i$.
\end{itemize}
Then for all $j\geq i$, $V_{i,j}$ is parafree of rank $d$.
\end{corollary}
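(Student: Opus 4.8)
The hypotheses (a) and (b) here are identical to those of the preceding Proposition, so it suffices to show that (a), (b) and (c$'$) together imply condition (c) of that Proposition; the conclusion then follows at once. Recall from the discussion preceding the Proposition that (c) --- the assertion that $R_j$ has Smith normal form with $1$s on the diagonal --- is equivalent to the greatest common divisor of the $(j+1)\times(j+1)$ minors of $R_j$ being $1$. Equivalently, for every prime $p$ the reduction of $R_j$ modulo $p$ should have full row rank $j+1$ over $\FF_p$, since a prime $p$ divides every maximal minor precisely when the rows of $R_j$ become linearly dependent modulo $p$. Thus I would fix $j\geq 0$ and a prime $p$, and in each case exhibit a square submatrix of $R_j$ whose determinant is a unit modulo $p$.

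The plan rests on the banded (Toeplitz) shape of $R_j$: its $(k,c)$ entry equals $a_{c-k}$ when $0\leq c-k\leq d$ and is $0$ otherwise, for $0\leq k\leq j$. First suppose $p\nmid m$. Since $a_d=-m$, the submatrix formed by the last $j+1$ columns (those indexed $d,\dots,d+j$) is lower triangular with every diagonal entry equal to $a_d$, so its determinant is $(-m)^{j+1}$, which is nonzero modulo $p$. Hence $R_j$ has full rank modulo every prime not dividing $m$, and this is exactly why condition (c$'$) is imposed only for the primes dividing $m$.

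Now suppose $p=p_i$ divides $m$. Then $p\mid a_d$, so by (c$'$) the largest index $r'$ with $p\nmid a_{r'}$ satisfies $0\leq r'\leq d-1$. I would then consider the submatrix formed by the $j+1$ columns indexed $r',r'+1,\dots,r'+j$ (all valid, since $r'+j\leq d+j-1$). Its entry in row $k$ and relative column $s$ is $a_{r'+s-k}$; for $s=k$ this is $a_{r'}$, while for $s>k$ the index $r'+s-k$ exceeds $r'$, so the entry vanishes modulo $p$ by maximality of $r'$ (or is already zero in $R_j$ when the index exceeds $d$). Thus this submatrix is lower triangular modulo $p$ with diagonal entries $a_{r'}$, and its determinant reduces to $a_{r'}^{\,j+1}\not\equiv 0\pmod p$. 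Combining the two cases shows that $R_j$ has full row rank modulo every prime, so the gcd of its maximal minors is $1$; this is condition (c), and the Proposition applies.

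The only delicate point is the second case: because the leading band coefficient $a_d$ becomes zero modulo a prime dividing $m$, the naive ``last $j+1$ columns'' minor degenerates, and one must instead anchor the triangular submatrix at the highest surviving coefficient $a_{r'}$. Identifying this shift, and checking both that the resulting columns lie within range and that the reduction is genuinely triangular, is the crux of the argument; everything else is routine bookkeeping with the Toeplitz pattern.
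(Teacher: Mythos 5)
Your proof is correct and follows essentially the same route as the paper's: both arguments reduce the claim to showing that the gcd of the $(j+1)\times(j+1)$ minors of $R_j$ is $1$, handle primes not dividing $m$ via the lower-triangular minor $(-m)^{j+1}$ formed by the last $j+1$ columns, and handle each prime $p_i\mid m$ by extracting a band of $j+1$ consecutive columns anchored at a coefficient $a_r$ with $p_i\nmid a_r$, whose determinant is $\equiv a_r^{\,j+1}\not\equiv 0 \pmod{p_i}$. The only (immaterial) difference is that the paper chooses the \emph{minimal} such index $r_i$, making the submatrix upper triangular modulo $p_i$, whereas you choose the \emph{maximal} one, making it lower triangular modulo $p_i$.
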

\proof We need to show that, if $j\geq 0$, the greatest common divisor of the set of all $(j+1)\times (j+1)$ minors of $R_j$ is $1$. Choose $r_i$ to be minimal such that $p_i$ does not divide $a_{r_i}$. Taking the $j+1$ columns of $R_j$ starting with column $r_i+1$ we obtain a matrix which has $a_{r_i}$ along the main diagonal and all entries below the main diagonal divisible by $p_i$. Hence the determinant of this submatrix is congruent modulo $p_i$ to $a_{r_i}^{j+1}$, so is not divisible by $p_i$.

The submatrix consisting of the last $j+1$ columns of $R_j$ is lower triangular, and has determinant $(-m)^{j+1}$, which is divisible only by primes in $\{ p_1,\ldots,p_l\}$. Hence the greatest common divisor of the $(j+1)\times (j+1)$ minors of $R_j$ is indeed $1$.
\endproof

Since verification of hypothesis (b) can be quite tedious even in
simple cases, we give the following version that provides an easily applied
algorithm.

\begin{corollary}\label{0.9} Suppose that $a_0,\ldots ,a_{d-1}$ have greatest common divisor $1$ and $m$ does not divide $a_{d-1}$. Then for all $j\geq i$, $V_{i,j}$ is parafree of rank $d$.
\end{corollary}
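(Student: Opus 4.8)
The plan is to deduce Corollary \ref{0.9} from Corollary \ref{0.8} by verifying that the three hypotheses (a), (b) and (c$'$) of that corollary hold once we assume $\gcd(a_0,\dots,a_{d-1})=1$ and $m\nmid a_{d-1}$. Two of these come for free. For (a), if $u=v^k$ in the free group $F_0$ then comparing weights gives $k\mid a_j$ for every $j$, so $k\mid\gcd(a_0,\dots,a_{d-1})=1$ and $u$ is indivisible in $F_0$. For (c$'$), a prime $p_i$ dividing $m$ cannot divide all of $a_0,\dots,a_{d-1}$, since their gcd is $1$; hence some $a_{r_i}$ with $r_i\le d-1$ is prime to $p_i$. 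The genuine content is hypothesis (b): that $b:=u(x_1,\dots,x_d)$ is indivisible in $V_0$ and is not conjugate in $V_0$ to an element of $F_0$.

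For the non-conjugacy assertion I would introduce the homomorphism $\phi\colon V_0\to\ZZ/m\ZZ$ sending $x_0,\dots,x_{d-1}$ to $0$ and $x_d$ to $1+m\ZZ$. This is well defined because the defining relation $u(x_0,\dots,x_{d-1})=x_d^m$ maps to $0=m\equiv0$. Every element of $F_0$ lies in $\ker\phi$, and $\phi$ is constant on conjugacy classes because its target is abelian, so it suffices to check that $\phi(b)\ne0$. Since $b$ is $u$ with all indices shifted up by one, the $x_d$-weight of $b$ equals the $x_{d-1}$-weight $a_{d-1}$ of $u$, whence $\phi(b)\equiv a_{d-1}\not\equiv0\pmod m$. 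This is precisely where the hypothesis $m\nmid a_{d-1}$ is used.

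For indivisibility the tempting route is to analyse $b$ through its amalgam normal form in $V_0=F_0\ast_{\langle u\rangle=\langle x_d^m\rangle}\langle x_d\rangle$ and appeal to the root theory for cyclically reduced elements (as in Baumslag's Lemma 28.6); I expect this to be the main obstacle, and I would sidestep it using the abelianization instead. Since $\gcd(a_0,\dots,a_{d-1})=1$, the relator vector $r=(a_0,\dots,a_{d-1},-m)$ is primitive, so $V_0^{\ab}\cong\ZZ^{d+1}/\langle r\rangle\cong\ZZ^d$ is torsion-free; consequently any element whose image in $V_0^{\ab}$ is primitive cannot be a proper power in $V_0$, for $b=v^k$ would force $\bar b=k\bar v$. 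Writing $\beta=(0,a_0,\dots,a_{d-1})$ for the weight vector of $b$, the image of $\beta$ in $\ZZ^{d+1}/\langle r\rangle$ is primitive exactly when the greatest common divisor $g$ of the $2\times2$ minors of $\binom{r}{\beta}$ equals $1$ (here one uses that $r$ is primitive, so $\bar r\ne0$ modulo every prime).

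The final and decisive step is this gcd computation. The minors using the first column are $a_0a_0,a_0a_1,\dots,a_0a_{d-1}$, which already force $g\mid a_0$. I would then run an induction on $i$ to prove $g\mid a_i$ for every $i$: the minor in columns $i<j$ equals $a_i\beta_j-r_j a_{i-1}$, and once $g\mid a_{i-1}$ is known the second term drops out, so $g$ divides $a_i\beta_j=a_ia_{j-1}$ for all $j>i$, and combined with the inductive hypothesis this gives $g\mid a_ia_k$ for all $k$, hence $g\mid a_i\gcd(a_0,\dots,a_{d-1})=a_i$. It follows that $g\mid\gcd(a_0,\dots,a_{d-1})=1$, so $\bar b$ is primitive and $b$ is indivisible in $V_0$. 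With (a), (b) and (c$'$) all established, Corollary \ref{0.8} applies and yields that each $V_{i,j}$ is parafree of rank $d$.
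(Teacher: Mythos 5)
Your proof is correct, and its skeleton is the same as the paper's: deduce the statement from Corollary \ref{0.8}, treat (a) and (c$'$) as near-immediate consequences of $\gcd(a_0,\dots,a_{d-1})=1$, and settle the non-conjugacy half of (b) with the homomorphism onto $\ZZ/m\ZZ$ killing $x_0,\dots,x_{d-1}$ and sending $x_d$ to a generator (this is exactly the paper's map $\theta$, applied before rather than after abelianizing). The only genuine divergence is the indivisibility half of (b). The paper supposes $u(x_1,\dots,x_d)=v^k$ with $k>1$, works in $V_0/V_0'$, applies $\theta$ to write $a_{d-1}=kb_d+mr$, eliminates $y_d$ via the relation $my_d=a_0y_0+\cdots+a_{d-1}y_{d-1}$ so as to land in the free abelian subgroup $A=\langle y_0,\dots,y_{d-1}\rangle$, and then runs a two-case analysis ($p\mid r$ versus $p\nmid r$) on a prime $p$ dividing $k$ to contradict $\gcd(a_0,\dots,a_{d-1})=1$. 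You instead note that the gcd hypothesis makes the relator vector $\rho=(a_0,\dots,a_{d-1},-m)$ primitive, so $V_0^{\ab}\cong\ZZ^{d+1}/\langle\rho\rangle$ is free abelian of rank $d$, and you invoke the general principle that an element with primitive image in a torsion-free abelianization cannot be a proper power; primitivity of the image of $\beta=(0,a_0,\dots,a_{d-1})$ is then equivalent (here primitivity of $\rho$ is indeed needed) to the $2\times 2$ minors of the matrix with rows $\rho,\beta$ having gcd $1$, which your induction establishes correctly --- and it goes through even if some $a_i=0$, since $g\mid a_ia_k$ for all $k$ still yields $g\mid a_i\gcd(a_0,\dots,a_{d-1})=a_i$. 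At bottom the two verifications are the same computation in different clothing: the paper's elimination of $y_d$ and prime case analysis is a hands-on check that no prime divides all of your minors. What your packaging buys is a quotable, reusable criterion (primitive abelianized image implies indivisible) and a transparent accounting of exactly where $\gcd=1$ enters; what the paper's buys is avoiding the linear-algebra setup, staying entirely inside $A$, at the cost of a more ad hoc manipulation. Either route legitimately completes the proof.
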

\proof We need to verify that conditions (a) and (b) in Corollary \ref{0.8} hold.  Let $y_i=x_iV_0'$ for $0\leq i\leq d$. The obvious map from $V_0$ to the cyclic group of order $m$ (cf.\ the proof of Proposition \ref{fpresp}) induces a map $\theta$ on $V_0/V_0'$, sending $y_d$ to a generator and $y_i$ to $1$ for $0\leq i\leq d-1$. By the proof of Lemma \ref{fppfr}, $y_0,\ldots,y_{d-1}$ form a basis for a free abelian subgroup, say $A$, of $V_0/V_0'$; writing this group additively, we see that $a_0y_0+\cdots + a_{d-1}y_{d-1}$ is not a proper multiple in $A$, so $u(x_0,\dots,x_{d-1})$ is indivisible in $F_0$. Hence (a) holds. To deduce (b), suppose first that $u(x_1,\dots,x_{d})$ is a proper power in $V_0$; then there is an equation
\[ a_0y_1+\cdots + a_{d-1}y_d=k(b_0y_0+\cdots + b_dy_d)\]
where $k>1$. Applying $\theta$, we find that $a_{d-1}=kb_d+mr$ for some integer $r$. This gives
\[ r a_0 y_0 + (a_0+r a_1)y_1+ \cdots +(a_{d-2}+ra_{d-1})y_{d-1}=k(b_0y_0+\cdots + b_{d-1}y_{d-1}).\]
Let $p$ be a prime divisor of $k$; so $p$ divides all coefficients on the left-hand side.
If $p|r$, then $p$ divides $a_0,\dots,a_{d-2}$.
Since $a_{d-1}=kb_d+mr$ and $p|r,k$, then $p$ also divides $a_{d-1}$, a contradiction.
If $(p,r)=1$ then $p$ divides $a_0$ ($p$ divides the $y_0$ coefficient), and hence $a_1,\dots,a_{d-1}$, a contradiction. Thus $u(x_1,\dots,x_d)$ is indivisible in $V_0$.
Finally, if $u(x_1,\ldots,x_d)$ is conjugate in $V_0$ to an element of $F_0$, then $(a_0y_1+\cdots + a_{d-1}y_d)\theta=0$, so $m$ divides $a_{d-1}$, and this is a contradiction.
\endproof

The following theorem implies Theorem \ref{nm} {\it a fortiori}.

\begin{theorem} \label{nonmon}
Let $G$ be the group with presentation $\langle x,t\mid w(x,t)\rangle$,  where $w(x,t)$ can be written as $u(x_0,\dots,x_{d-1})x_{d}^{-m}$ with $m>0$ and $x_i:=x^{t^i}$ for $i\in \ZZ$. If $u$ satisfies  the hypotheses of either Corollary {\rm \ref{0.8}} or Corollary {\rm\ref{0.9}} and all roots of $A_w(X)$ are positive, then the normal subgroup of $G$ generated by $x$ is residually $($torsion-free nilpotent\/$)$, and consequently $G$ is bi-orderable.
\end{theorem}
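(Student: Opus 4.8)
The plan is to deduce the bi-orderability of $G$ from Theorem \ref{resnilptoorder} once residual nilpotence is established, and to obtain the latter from the parafree structure of the groups $V_{0,j}$ by a direct-limit argument feeding into Proposition \ref{laboursaving}. Write $K$ for the normal subgroup of $G$ generated by $x$, so that $G=K\rtimes\langle t\rangle$ with $\langle t\rangle$ infinite cyclic, hence bi-orderable and abelian. As recorded in the Introduction, $A_w(X)$ is a non-zero scalar multiple of the characteristic polynomial of the map induced by conjugation by $t$ on the finite-dimensional space $V=K^{\ab}\otimes\RR$, so $\dim V=d$ and the eigenvalues of this map are precisely the roots of $A_w(X)$. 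Since every such root is positive, the hypotheses of Theorem \ref{resnilptoorder} hold with $\Phi=\langle t\rangle$, and it suffices to prove that $K$ is residually $\Ntf$.

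To reach the two-sided union $K$ I would take $M:=\langle x_i\mid i\geq 0\rangle=\bigcup_{j\geq0}V_{0,j}$ and apply Proposition \ref{laboursaving}. Here $M^t=\langle x_i\mid i\geq1\rangle\leq M$ and $K=\bigcup_{n\in\ZZ}M^{t^n}$, so the set-up of that proposition is met. Condition (ii) holds because $M=\langle M^t,x_0\rangle$ and $x_0$ is a solution of the single one-variable equation $u(x_0,x_1,\dots,x_{d-1})x_d^{-m}=1$ over $M^t$ (the letters $x_1,\dots,x_d$ lying in $M^t$), whose weight in the variable $x_0$ is the constant coefficient $a_0=A_w(0)\neq 0$. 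For condition (i), note that by Corollary \ref{0.8} or Corollary \ref{0.9} each $V_{0,j}$ is parafree of rank $d$, so $V_{0,j}^{\ab}\cong\ZZ^d$; since $M^{\ab}$ is the direct limit of the groups $V_{0,j}^{\ab}$ it is torsion-free, and as $M^{\ab}\otimes\QQ$ has dimension at most $d$ it is a torsion-free abelian group of rank at most $d$, hence of finite Pr\"ufer rank. The whole difficulty is therefore concentrated in the remaining hypothesis of Proposition \ref{laboursaving}, namely that $M$ itself is residually $\Ntf$.

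For this I would argue as follows. Each $V_{0,j}$ is parafree of rank $d$, so $V_{0,j}/\gamma_n(V_{0,j})$ is finitely generated torsion-free nilpotent and $V_{0,j}$ is residually nilpotent. Since $G\mapsto G/\gamma_n(G)$ commutes with direct limits, $M/\gamma_n(M)=\varinjlim_j V_{0,j}/\gamma_n(V_{0,j})$ is a direct limit of torsion-free groups and so is torsion-free; consequently the isolator of $\gamma_n(M)$ equals $\gamma_n(M)$, and it is enough to prove that $M$ is residually nilpotent, i.e.\ that $\bigcap_n\gamma_n(M)=1$. The key point, and the main obstacle since residual nilpotence is not in general inherited by ascending unions, is that each inclusion $V_{0,j}\hookrightarrow V_{0,j+1}$ induces an injection on every lower central quotient. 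By $(\dagger\dag)$ the group $V_{0,j+1}$ is obtained from $V_{0,j}$ by adjoining an $m$-th root of $u_j:=u(x_{j+1},\dots,x_{j+d})$. Fix $n$, put $N=V_{0,j}/\gamma_n(V_{0,j})$, and let $N_\QQ\supseteq N$ be its Mal'cev completion, a radicable torsion-free nilpotent group of the same class in which the image of $u_j$ has a unique $m$-th root $\rho$. The universal property of the amalgamated free product yields a homomorphism $V_{0,j+1}\to N_\QQ$ extending $V_{0,j}\to N\hookrightarrow N_\QQ$ and sending the new generator to $\rho$; as $N_\QQ$ has class less than $n$, it factors through $V_{0,j+1}/\gamma_n(V_{0,j+1})$, and the composite $V_{0,j}/\gamma_n(V_{0,j})\to V_{0,j+1}/\gamma_n(V_{0,j+1})\to N_\QQ$ is the embedding $N\hookrightarrow N_\QQ$. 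Hence the transition map is injective.

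Granting this injectivity, residual nilpotence of $M$ is immediate: given $1\neq g\in M$, choose $j_0$ with $g\in V_{0,j_0}$ and $n$ with $g\notin\gamma_n(V_{0,j_0})$; the injective transition maps force $g\notin\gamma_n(V_{0,j})$ for every $j\geq j_0$, and since $\gamma_n(M)=\bigcup_{j}\gamma_n(V_{0,j})$ we obtain $g\notin\gamma_n(M)$. Thus $M$ is residually $\Ntf$, Proposition \ref{laboursaving} gives that $K$ is residually $\Ntf$, and Theorem \ref{resnilptoorder} then yields that $G$ is bi-orderable. I expect the delicate step to be the injectivity of the maps on lower central quotients; everything else is bookkeeping with the direct limit together with the two routine verifications for Proposition \ref{laboursaving}.
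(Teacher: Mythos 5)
Your proposal is correct, and its overall skeleton coincides with the paper's: reduce bi-orderability to residual (torsion-free nilpotence) of $K$ via Theorem \ref{resnilptoorder}, pass from $K$ to the one-sided union $M=K_+=\bigcup_{j\geq0}V_{0,j}$ via Proposition \ref{laboursaving} (the paper checks hypothesis (i) exactly as you do, and leaves (ii) implicit, whereas you verify it explicitly using the equation satisfied by $x_0$ over $M^t$ with weight $a_0\neq0$), invoke Corollary \ref{0.8} or \ref{0.9} to know that each $V_{0,j}$ is parafree of rank $d$, and then establish the key claim that each inclusion $V_{0,j}\hookrightarrow V_{0,k}$ induces an injection $V_{0,j}/\gamma_n(V_{0,j})\to V_{0,k}/\gamma_n(V_{0,k})$, equivalently $\gamma_n(V_{0,j})=V_{0,j}\cap\gamma_n(V_{0,k})$, from which residual (torsion-free nilpotence) of $M$ follows by the same limit bookkeeping in both texts.

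Where you genuinely diverge is in the proof of that key claim. The paper argues by rank counting: since $V_{0,j}$ and $V_{0,k}$ are parafree of the \emph{same} rank $d$, the quotients $V_{0,j}/\gamma_n(V_{0,j})$ and $V_{0,k}/\gamma_n(V_{0,k})$ are isomorphic, hence have the same torsion-free rank; the image of $V_{0,j}$ in the latter has finite index by Lemma \ref{bigsubgroups}(b) (via Remark \ref{tsec}), so this image has the same torsion-free rank as $V_{0,j}/\gamma_n(V_{0,j})$ itself, and since that group is torsion-free the kernel must be trivial. You instead exploit the explicit root-adjunction structure $(\dagger\dag)$: embed $N=V_{0,j}/\gamma_n(V_{0,j})$ (torsion-free and finitely generated nilpotent, because $V_{0,j}$ is parafree) into its Mal'cev completion $N_\QQ$, use radicability to find an $m$-th root $\rho$ of the image of $u_j$, and use the pushout property of the amalgamated free product to extend $V_{0,j}\to N_\QQ$ to $V_{0,j+1}\to N_\QQ$; since $N_\QQ$ has class less than $n$, this factors through $V_{0,j+1}/\gamma_n(V_{0,j+1})$ and exhibits the transition map as injective. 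Both arguments are sound. The paper's is self-contained within its own lemmas, but it needs parafreeness of both ends of the inclusion and the finite-index argument; yours imports classical Mal'cev theory of radicable torsion-free nilpotent groups (existence of roots in the completion, preservation of the class), which is external to the paper but standard, and in exchange uses less of Section 7 — only that each $V_{0,j}$ is residually nilpotent with torsion-free lower central quotients — while making transparent the underlying reason the claim holds: adjoining an $m$-th root can kill nothing modulo $\gamma_n$, because such a root already exists in the Mal'cev completion of the nilpotent quotient.
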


\proof

Note that $a_k$, the weight of $x_k$ in $u(x_0,\dots,x_{d-1})x_d^{-m}$, is the coefficient of $X^k$ in $A_w(X)$.

Recall that $K$ is the normal subgroup of $G$ generated by $x$. We have identified $x_i$ with $x^{t^i}$, so that $K=\bigcup\{V_{i,j}\mid i,j\in \ZZ,\;i\leq j\}$. By Theorem \ref{resnilptoorder}, it is enough to prove that $K$ is residually $\Ntf$.
Let $K_+:=\bigcup\{V_{0,j}\mid j\in \ZZ_{\geq 0}\}$.  
For each $j$, the group $V_{0,j}$ is parafree of rank $d$, and so its image in $K_+^{\rm ab}$
has Pr\"ufer rank at most $d$. It follows that $K_+^{\rm ab}$
has Pr\"ufer rank at most $d$.
Therefore by Proposition \ref{laboursaving} it is enough to prove that $K_+$ is residually $\Ntf$.

First we claim that $\gamma_{c+1}(V_{0,j})=V_{0,j}\cap \gamma_{c+1}(V_{0,k})$
whenever $c>0$ and $j<k$.  Certainly  $\gamma_{c+1}(V_{0,j})\leq V_{0,j}\cap \gamma_{c+1}(V_{0,k})$.  Since $V_{0,j}$, $V_{0,k}$ are parafree of rank $d$, the groups
$V_{0,j}/\gamma_{c+1}(V_{0,j})$, $V_{0,k}/\gamma_{c+1}(V_{0,k})$ are isomorphic and have the same torsion-free rank, $\kappa$, say. The image in the latter of $V_{0,j}$ has finite index by Lemma \ref{bigsubgroups}, and so both it and its isomorphic image
$V_{0,j}/(V_{0,j}\cap \gamma_{c+1}(V_{0,k}))$ have torsion-free rank $\kappa$.  Thus the torsion-free group $V_{0,j}/\gamma_{c+1}(V_{0,j})$ has the same torsion-free rank as its image
$V_{0,j}/(V_{0,j}\cap \gamma_{c+1}(V_{0,k}))$, and our claim follows.

Hence $\gamma_{c+1}(V_{0,j})=V_{0,j}\cap \gamma_{c+1}(K_+)$
for all $c$ and all $j$.  Therefore $\bigcap_c\gamma_c(K_+)$ intersects each $V_{0,j}$ in the trivial subgroup.  Consequently, $K_+$ is residually $\Ntf$,  as required.  \endproof

I. M. Chiswell

SCHOOL OF MATHEMATICAL SCIENCES

QUEEN MARY, UNIVERSITY OF LONDON

MILE END ROAD

LONDON E1 4NS, ENGLAND.

Email: I.M.Chiswell@qmul.ac.uk
\medskip

A. M. W. Glass

QUEENS' COLLEGE

CAMBRIDGE CB3 9ET, ENGLAND

Email: amwg@dpmms.cam.ac.uk
\medskip

John S. Wilson

MATHEMATICAL INSTITUTE

ANDREW WILES BUILDING

WOODSTOCK ROAD

OXFORD OX2 6GG, ENGLAND

Email: John.Wilson@maths.ox.ac.uk

\end{document}